\newcommand{\colorA}{Orchid}
\newcommand{\colorB}{Tan}
\newcommand{\colorC}{JungleGreen}
\newcommand{\colorD}{NavyBlue}
\newcommand{\colorE}{gray}
\theoremstyle{definition}
\newtheorem{theorem}{Theorem}[section]
\newtheorem{definition}[theorem]{Definition}
\newtheorem{lemma}[theorem]{Lemma}
\newtheorem{proposition}[theorem]{Proposition}
\newtheorem{corollary}[theorem]{Corollary}
\newtheorem{conjecture}[theorem]{Conjecture}
\newtheorem{example}[theorem]{Example}
\newtheorem*{remark}{Remark}
\tikzset{
  mynode/.style={fill,circle,draw,inner sep=2pt,outer sep=0pt}
}
\newcommand{\R}{\mathbb{R}}
\newcommand{\ZZ}{\mathbb{Z}}
\newcommand{\ZZZ}{\mathbb{Z}_{\ge 0}}
\newcommand{\B}{\mathcal{B}}
\newcommand{\N}{\mathbb{N}}
\renewcommand{\P}{\mathcal{P}}
\newcommand{\id}{\mathrm{id}}
\newcommand{\Newton}{\mathrm{Newton}}
\newcommand{\conv}{\mathrm{conv}}
\newcommand{\Inv}{\mathrm{Inv}}
\newcommand{\GW}{\mathrm{GW}}
\newcommand{\supp}{\mathrm{supp}}
\newcommand{\1}{\mathds{1}}
\title{Newton polytopes of dual Schubert polynomials}
\date{\today}
\author{Serena An}
\address{Massachusetts Institute of Technology}
\email{anser@mit.edu}
\author{Katherine Tung}
\address{Harvard University}
\email{katherinetung@college.harvard.edu}
\author{Yuchong Zhang}
\address{University of Michigan}
\email{zongxun@umich.edu}
\begin{document}

\begin{abstract}
The M-convexity of dual Schubert polynomials was first proven by Huh, Matherne, M\'esz\'aros, and St.\ Dizier in 2022.
We give a full characterization of the supports of dual Schubert polynomials, which yields an elementary alternative proof of the M-convexity result, and furthermore strengthens it by explicitly characterizing the vertices of their Newton polytopes combinatorially.
\end{abstract}

\maketitle

\section{Introduction}
Dual Schubert polynomials $\{D^w\}_{w\in S_{n}}$, introduced by Bernstein, Gelfand, and Gelfand \cite{BGG}, are given by the following combinatorial formula \cite{PostnikovStanley}. Let the edge $u \lessdot ut_{ab}$ in the (strong) Bruhat order have weight 
\[m(u \lessdot ut_{ab}) \coloneqq x_a + x_{a+1} + \cdots + x_{b-1},\] 
and let the saturated chain $C = (u_0 \lessdot u_1\lessdot\cdots \lessdot u_{\ell})$ have weight 
\[m_C\coloneq m(u_0 \lessdot u_1)m(u_1 \lessdot u_2)\cdots m(u_{\ell - 1} \lessdot u_{\ell}).\]

\begin{definition}\label{def:dualSchubert}
    For $w \in S_n$, the \emph{dual Schubert polynomial $D^w$} is defined by \[D^w(x_1, \dots,x_{n-1}) \coloneqq \frac{1}{\ell(w)!} \sum_C m_C(x_1, \dots,x_{n-1}),\] where $\ell(w)$ denotes the Coxeter length of $w$, and the sum is over all saturated chains $C$ from $\id$ to $w$.
\end{definition}

Dual Schubert polynomials possess deep geometric and algebraic properties. For example, the $\lambda$-degree of a Schubert variety $X_w$ can be expressed as $\ell(w)! \cdot D^w(\lambda)$ \cite{PostnikovStanley}. Additionally, dual Schubert polynomials form a dual basis to Schubert polynomials under a certain natural pairing $\langle \cdot, \cdot \rangle$ on polynomials \cite{PostnikovStanley}. 

In recent years, dual Schubert polynomials have attracted increasing interest \cite{InvolRC, Hamaker,HuhMeszaroLorSchur}. We continue the study of dual Schubert polynomials by fully characterizing their \emph{supports} (set of exponent vectors) and \emph{Newton polytopes} (convex hull of the support).

\begin{theorem}\label{theorem:support}
    The support of the dual Schubert polynomial $D^w$ is
    \[\supp (D^w)= \sum_{(a,b) \in \Inv(w)}\{e_a, e_{a+1}, \dots, e_{b-1}\},\]
    where the right-hand side is a Minkowski sum of sets of elementary basis vectors. The sum is over pairs of indices $(a, b)$ for which there is an inversion in $w$.
\end{theorem}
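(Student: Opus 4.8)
The plan is to prove the identity by induction on $\ell(w)$, using the recurrence obtained by grouping the saturated chains from $\id$ to $w$ according to their last edge. Throughout, write
\[
  S_w \coloneqq \sum_{(a,b) \in \Inv(w)} \{e_a, e_{a+1}, \dots, e_{b-1}\}
\]
for the right-hand side of the theorem, so that the claim is $\supp(D^w) = S_w$. The base case $w = \id$ holds since $D^{\id} = 1$, $\Inv(\id) = \varnothing$, and the empty Minkowski sum is $\{0\}$.

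First I would derive the recurrence
\[
  \ell(w)\, D^w \;=\; \sum_{\substack{(c,d) \in \Inv(w) \\ w t_{cd} \lessdot w}} \bigl(x_c + x_{c+1} + \cdots + x_{d-1}\bigr)\, D^{w t_{cd}},
\]
where the sum runs over inversions $(c,d)$ with $c<d$ for which $w t_{cd}$ is covered by $w$ in Bruhat order, equivalently for which no $e$ satisfies $c<e<d$ and $w(d) < w(e) < w(c)$. Indeed, grouping the saturated chains from $\id$ to $w$ by their penultimate element $u$ and using \Cref{def:dualSchubert} gives $\ell(w)!\, D^w = \sum_{u \lessdot w} m(u \lessdot w)\cdot \ell(u)!\, D^u$; since $\ell(u) = \ell(w)-1$ for $u \lessdot w$, the covers are exactly $u = w t_{cd}$ as above with $m(w t_{cd} \lessdot w) = x_c + \cdots + x_{d-1}$, and dividing by $(\ell(w)-1)!$ gives the recurrence. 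As $D^w$ has nonnegative coefficients (by \Cref{def:dualSchubert} it is a positive scalar times a sum of products of linear forms with coefficients in $\{0,1\}$), there is no cancellation; passing to supports — recall that for polynomials with nonnegative coefficients, the support of a product is the Minkowski sum of the supports and the support of a sum is the union — and applying the inductive hypothesis $\supp(D^{wt_{cd}}) = S_{wt_{cd}}$, we obtain
\[
  \supp(D^w) \;=\; \bigcup_{\substack{(c,d) \in \Inv(w) \\ w t_{cd} \lessdot w}} \bigl( \{e_c, \dots, e_{d-1}\} + S_{w t_{cd}} \bigr).
\]

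The crux is to compare $\Inv(w t_{cd})$ with $\Inv(w)$ when $w t_{cd} \lessdot w$. Swapping the values in positions $c$ and $d$ can change the inversion status only of pairs of positions containing $c$ or $d$, and the cover hypothesis excludes any change among pairs with both indices strictly between $c$ and $d$; a routine case analysis then shows
\[
  \Inv(w t_{cd}) \;=\; \bigl(\Inv(w) \setminus (\{(c,d)\} \cup A \cup B)\bigr) \;\sqcup\; A' \;\sqcup\; B',
\]
where $A = \{(i,d) : i < c,\ w(d) < w(i) < w(c)\}$, $B = \{(c,j) : j > d,\ w(d) < w(j) < w(c)\}$, and $A'$, $B'$ are obtained from $A$, $B$ by replacing each $(i,d)$ with $(i,c)$ and each $(c,j)$ with $(d,j)$. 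The key observation is that, since $c < d$, these replacements only \emph{shrink} the associated interval of basis vectors: $\{e_i, \dots, e_{c-1}\} \subseteq \{e_i, \dots, e_{d-1}\}$ and $\{e_d, \dots, e_{j-1}\} \subseteq \{e_c, \dots, e_{j-1}\}$. Consequently the map $\rho \colon \Inv(w t_{cd}) \to \Inv(w) \setminus \{(c,d)\}$ that fixes every inversion in $\Inv(w) \setminus \{(c,d)\}$ and sends $(i,c) \mapsto (i,d)$ and $(d,j) \mapsto (c,j)$ on $A' \sqcup B'$ is a bijection satisfying $\{e_a, \dots, e_{b-1}\} \subseteq \{e_{a'}, \dots, e_{b'-1}\}$ whenever $\rho(a,b) = (a',b')$. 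By monotonicity of Minkowski sums, for each cover-inversion $(c,d)$,
\[
  \{e_c, \dots, e_{d-1}\} + S_{w t_{cd}} \;\subseteq\; \{e_c, \dots, e_{d-1}\} + \sum_{(a,b) \in \Inv(w) \setminus \{(c,d)\}} \{e_a, \dots, e_{b-1}\} \;=\; S_w ,
\]
so the union above is contained in $S_w$, giving $\supp(D^w) \subseteq S_w$.

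For the reverse inclusion it suffices to produce one cover-inversion $(c,d)$ of $w$ with $A = B = \varnothing$: then $\Inv(w t_{cd}) = \Inv(w) \setminus \{(c,d)\}$, so the corresponding term of the union is exactly $\{e_c, \dots, e_{d-1}\} + S_{w t_{cd}} = S_w$, yielding $S_w \subseteq \supp(D^w)$. Since $w \neq \id$ in the inductive step, there is a value $v$ such that $v+1$ occurs before $v$ in the one-line notation of $w$; letting $c$ and $d$ be their respective positions gives $c < d$, $w(c) = v+1 > v = w(d)$, and $(c,d)$ is a cover-inversion — since no value lies strictly between $v$ and $v+1$ — with $A = B = \varnothing$ for the same reason. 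This closes the induction. The main obstacle is the case analysis describing $\Inv(w t_{cd})$, and in particular checking that the cover hypothesis forces every interval change to be a shrinking; the remaining steps are bookkeeping together with the positivity of $D^w$.
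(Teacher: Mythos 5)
Your proof is correct, and its skeleton largely parallels the paper's: both arguments are inductions on $\ell(w)$ whose engine is the same one-step comparison of $\Inv(w)$ with $\Inv(wt_{cd})$ across a cover, and your bijection $\rho$ sending $(i,c)\mapsto(i,d)$ and $(d,j)\mapsto(c,j)$ is exactly the ``dominant pairing'' the paper builds in its \cref{lemma:chainorder} (your sets $A,B,A',B'$ are its cases (ii) and (iv)), so the upper bound $\supp(D^w)\subseteq S_w$ is the same argument, merely packaged as a recurrence $\ell(w)D^w=\sum_{u\lessdot w}m(u\lessdot w)D^u$ on $D^w$ itself rather than chain-by-chain. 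The genuine difference is the lower bound: the paper introduces greedy chains, proves they exist (\cref{lem:existsgreedychain}), and shows by a contradiction argument (choosing a minimal $r$) that each greedy cover deletes exactly one inversion, so the greedy chain's weight is exactly $\GW(w)$ (\cref{lemma:greedy=GW}); you instead take the cover swapping the consecutive values $v+1,v$, for which $A=B=\varnothing$ is immediate and $\Inv(wt_{cd})=\Inv(w)\setminus\{(c,d)\}$ needs no further argument. Your choice is simpler and avoids both the existence lemma and the minimal-$r$ argument, at the cost of not yielding the paper's slightly stronger statement (\cref{theorem:DualSchubSCNP}) that \emph{every} greedy chain is dominant, though iterating your consecutive-value covers does produce one dominant chain and hence would still give SCNP if wanted. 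One cosmetic slip: the pairs whose status the cover hypothesis freezes are those of the form $(c,j)$ or $(j,d)$ with $c<j<d$, not ``pairs with both indices strictly between $c$ and $d$'' (the latter never involve $c$ or $d$); your displayed description of $\Inv(wt_{cd})$ is nonetheless correct.
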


A polynomial $f$ has a \emph{saturated Newton polytope (SNP)} if its support is all integer points in its Newton polytope. The SNP property was first defined by Monical, Tokcan and Yong \cite{NewtonPolyinAC}. 
Many polynomials with algebraic combinatorial significance are known to have SNP, such as Schur polynomials \cite{Rado} and resultants \cite{GKZ}. 
Monical, Tokcan, and Yong proved SNP for additional families of polynomials, including cycle index polynomials, Reutenauer’s symmetric polynomials linked to the free Lie algebra and to Witt vectors, Stembridge’s symmetric polynomials associated to totally nonnegative matrices, and symmetric Macdonald polynomials. 
Subsequent work of Fink, M\'esz\'aros, and St.\ Dizier proved SNP for key polynomials and Schubert polynomials \cite{FMD}, work of Castillo, Cid Ruiz, Mohammadi, and Monta\~no proved SNP for double Schubert polynomials \cite{CCMM}, and work of Huh, Matherne, M\'esz\'aros, and St.\ Dizier \cite{HuhMeszaroLorSchur} proved Lorentzian-ness, which implies SNP, for dual Schubert polynomials.

Proving SNP can be difficult given that many polynomial operations, such as multiplication, do not preserve SNP. However, a wide range of techniques have been harnessed to prove SNP. For instance, Rado uses elementary combinatorial techniques \cite{Rado}, Fink, M\'esz\'aros, and St.\ Dizier rely on representation theory \cite{FMD}, and Castillo et al.\ as well as Huh et al.\ use results from algebraic geometry \cite{CCMM,HuhMeszaroLorSchur}. 

As a corollary of \Cref{theorem:support}, we obtain an elementary proof of the following result.

\begin{corollary}(cf.\ \cite[Proposition~18]{HuhMeszaroLorSchur})\label{thm:M-conv}
Dual Schubert polynomials are \emph{M-convex}; equivalently, they have SNP and their Newton polytopes are generalized permutahedra.
\end{corollary}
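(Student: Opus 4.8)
The plan is to read the corollary off \Cref{theorem:support} by a direct exchange argument. Recall that a nonempty finite set $J\subseteq\ZZ^{n-1}$ is \emph{M-convex} if for all $\alpha,\beta\in J$ and every index $i$ with $\alpha_i>\beta_i$ there is an index $j$ with $\alpha_j<\beta_j$ such that both $\alpha-e_i+e_j\in J$ and $\beta+e_i-e_j\in J$, and that a polynomial is M-convex precisely when its support is an M-convex set. Set $J=\supp(D^w)$. By \Cref{theorem:support}, an element of $J$ is encoded by a \emph{choice function} $f$ assigning to each inversion $(a,b)\in\Inv(w)$ a value $f(a,b)\in\{a,a+1,\dots,b-1\}$, with exponent vector $\alpha^f$ defined by $(\alpha^f)_i=\#\{(a,b)\in\Inv(w):f(a,b)=i\}$; note every element of $J$ has coordinate sum $|\Inv(w)|=\ell(w)$.

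To check the exchange axiom, fix $\alpha=\alpha^f$, $\beta=\alpha^g$ in $J$ and an index $i$ with $\alpha_i>\beta_i$. I would form the directed multigraph $G$ on vertex set $\{1,\dots,n-1\}$ with one edge $f(a,b)\to g(a,b)$ for each inversion with $f(a,b)\neq g(a,b)$; a direct count gives $\mathrm{outdeg}_G(k)-\mathrm{indeg}_G(k)=\alpha_k-\beta_k$ for all $k$. Since $\alpha_i>\beta_i$, vertex $i$ has positive excess, so a reachability argument (edges internal to the set $R$ of vertices reachable from $i$ cancel in $\sum_{k\in R}(\mathrm{outdeg}_G(k)-\mathrm{indeg}_G(k))$, while no edge leaves $R$, forcing this sum to be $\le 0$) produces a simple directed path $i=k_0\to k_1\to\dots\to k_t=j$ with $\alpha_j<\beta_j$. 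For each edge $k_s\to k_{s+1}$ pick an inversion $(a_s,b_s)$ realizing it; these are pairwise distinct (an inversion contributes at most one edge to $G$), and $k_s,k_{s+1}\in\{a_s,\dots,b_s-1\}$ because $f,g$ are choice functions. Rerouting $f$ by $f'(a_s,b_s)=k_{s+1}$ for all $s$ (and $f'=f$ otherwise) yields a choice function with $\alpha^{f'}=\alpha-e_i+e_j$, the changes $-e_{k_s}+e_{k_{s+1}}$ telescoping; symmetrically, rerouting $g$ by $g'(a_s,b_s)=k_s$ yields $\alpha^{g'}=\beta+e_i-e_j$. Both lie in $J$, so $J$ is M-convex and $D^w$ is an M-convex polynomial.

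It remains to unwind the stated equivalence. By the standard identification of M-convex sets with the lattice points of integral generalized permutahedra (equivalently, the bases of integral polymatroids), a nonempty finite $J\subseteq\ZZ^{n-1}$ is M-convex if and only if $\conv(J)$ is a generalized permutahedron and $J=\conv(J)\cap\ZZ^{n-1}$. For $J=\supp(D^w)$, this says exactly that $\Newton(D^w)=\conv(\supp(D^w))$ is a generalized permutahedron and that $D^w$ has SNP. Concretely, since $\conv$ commutes with Minkowski sums, $\Newton(D^w)=\sum_{(a,b)\in\Inv(w)}\conv\{e_a,\dots,e_{b-1}\}$ is a Minkowski sum of coordinate simplices, every edge of which is parallel to some $e_c-e_d$; such a polytope is a generalized permutahedron.

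The genuine content is all in \Cref{theorem:support}; the corollary reduces to the exchange argument above plus a routine translation of definitions. The only step calling for real care is the reachability/flow argument that locates an augmenting path from a positive-excess index to a negative-excess one — the rest is bookkeeping with choice functions. Alternatively, one can bypass the explicit argument by quoting the classical fact from discrete convex analysis that a Minkowski sum of M-convex sets is M-convex, since each summand $\{e_a,\dots,e_{b-1}\}$ is evidently M-convex.
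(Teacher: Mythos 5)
Your argument is correct, and it reaches the corollary by a genuinely different mechanism than the paper. Both proofs start from the same input, the support description in \Cref{theorem:support} (equivalently, $\supp(D^w)=\supp(\GW(w))$), but from there the paper splits the statement into its two halves and cites machinery for each: SNP comes from \Cref{prop:linearprodSNP} (products of nonnegative linear forms have SNP, via \cite{2009minkowski}), the generalized permutahedron property comes from the matroid polytopes $P(M_{ab})$ together with \Cref{prop:matroidsarepermutahedra} and \Cref{prop:permutahedraminkowskisum}, and M-convexity is then read off from the equivalence in \Cref{prop:Mconvex=SNP+GP}. You instead verify the exchange axiom for $\supp(D^w)$ directly, encoding support points by choice functions on $\Inv(w)$ and producing the exchange index $j$ by an augmenting-path argument in the discrepancy digraph; your flow computation, the distinctness of the inversions along a simple path, and the two reroutings giving $\alpha-e_i+e_j$ and $\beta+e_i-e_j$ all check out, and in effect you reprove by hand (in this special case) the discrete-convex-analysis fact that Minkowski sums of M-convex sets are M-convex \cite{DicreteConvexAnaly}, which you correctly note could be quoted instead. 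Your route is more self-contained on the convexity side and makes the exchange structure of $\supp(D^w)$ explicit; the paper's route is a one-line deduction from results it wants anyway, and in particular yields as a byproduct the explicit inequality description of $\Newton(D^w)$ as $P_n^z(\{z_I\})$ in \Cref{thm:DualSchubertAsGeneralizedPermutahedron}, which your edge-direction remark does not provide. Note also that you still invoke the correspondence between M-convex sets and lattice points of integral generalized permutahedra to get the ``equivalently'' clause, which is exactly the content of \Cref{prop:Mconvex=SNP+GP} (homogeneity of $D^w$, which you observe via the constant coordinate sum $\ell(w)$, is needed there), so the two proofs share that final translation step, just applied in opposite directions.
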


Strengthening this result, we fully characterize the vertices of $\Newton(D^w)$.

\begin{corollary}\label{thm:DSPNPVert}
    For $w\in S_n$, the vertices of $\Newton(D^w)$ are
    \[\{\alpha\in\ZZ_{\ge 0}^{n-1} \mid x^{\alpha}\text{ has coefficient }1\text{ in }\prod_{(a,b) \in \Inv(w)} (x_a+x_{a+1}+\dots +x_{b-1})\}.\]
\end{corollary}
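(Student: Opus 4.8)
The plan is to pass from \Cref{theorem:support} to a statement about Minkowski sums of simplices, and then to a purely graph-theoretic criterion. Since the convex hull of a Minkowski sum of finite point sets is the Minkowski sum of the convex hulls, \Cref{theorem:support} gives
\[
\Newton(D^w) \;=\; \conv\bigl(\supp(D^w)\bigr) \;=\; \sum_{(a,b)\in\Inv(w)} \Delta_{(a,b)},
\qquad \text{where } \Delta_{(a,b)}\coloneqq\conv\{e_a,e_{a+1},\dots,e_{b-1}\},
\]
a Minkowski sum of standard simplices whose vertices are the $e_j$ with $a\le j\le b-1$. For a Minkowski sum of polytopes $P=\sum_i P_i$, the set of maximizers of a linear functional $c$ on $P$ is the Minkowski sum of the sets of maximizers on the $P_i$, and such a sum is a single point precisely when each summand is; hence a point $\alpha\in P$ is a vertex if and only if there is a linear functional $c$ together with a decomposition $\alpha=\sum_i v_i$ in which each $v_i$ is the \emph{unique} maximizer of $c$ on $P_i$. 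Call a function $f\colon\Inv(w)\to\{1,\dots,n-1\}$ \emph{admissible} if $a\le f(a,b)\le b-1$ for every $(a,b)\in\Inv(w)$. Applied to our simplices, the above says: $\alpha$ is a vertex of $\Newton(D^w)$ if and only if there is an admissible $f$ with $\sum_{(a,b)\in\Inv(w)}e_{f(a,b)}=\alpha$ and a vector $c\in\R^{n-1}$ with $c_{f(a,b)}>c_k$ for every $(a,b)\in\Inv(w)$ and every $k\in\{a,\dots,b-1\}\setminus\{f(a,b)\}$.

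Given an admissible $f$, encode the required inequalities in the directed graph $G_f$ on vertex set $\{1,\dots,n-1\}$ with an edge $f(a,b)\to k$ for every inversion $(a,b)$ and every $k\in\{a,\dots,b-1\}\setminus\{f(a,b)\}$; note $G_f$ is loopless. A system of strict inequalities $c_x>c_y$ has a real solution if and only if the digraph recording the pairs $(x,y)$ is acyclic: a directed cycle would force $c_{x_1}>\cdots>c_{x_1}$, while a topological order of an acyclic digraph yields a solution. Thus $\alpha$ is a vertex of $\Newton(D^w)$ if and only if some admissible $f$ with $\sum_{(a,b)}e_{f(a,b)}=\alpha$ has $G_f$ acyclic. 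On the other hand, expanding $\prod_{(a,b)\in\Inv(w)}(x_a+x_{a+1}+\cdots+x_{b-1})$, the coefficient of $x^\alpha$ is exactly the number of admissible $f$ with $\sum_{(a,b)}e_{f(a,b)}=\alpha$. Therefore \Cref{thm:DSPNPVert} reduces to the following claim: \emph{for an admissible $f$ with $\alpha=\sum_{(a,b)}e_{f(a,b)}$, the graph $G_f$ is acyclic if and only if $f$ is the only admissible function with this sum.} Granting the claim, the existence of one admissible $f$ with $\sum_{(a,b)}e_{f(a,b)}=\alpha$ and $G_f$ acyclic is equivalent to there being exactly one admissible function with that sum, i.e.\ to the coefficient of $x^\alpha$ being $1$; and every vertex of $\Newton(D^w)$ lies in $\ZZ_{\ge0}^{n-1}$, being a vertex of the convex hull of finitely many points of $\supp(D^w)$.

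I would prove the claim --- the crux of the argument --- by contraposition in both directions. If $G_f$ has a directed cycle $j_1\to j_2\to\cdots\to j_m\to j_1$ with the $j_t$ distinct (and $m\ge2$ since $G_f$ is loopless), then each edge $j_t\to j_{t+1}$ (indices mod $m$) arises from an inversion $(a_t,b_t)$ with $f(a_t,b_t)=j_t$ and $j_{t+1}\in\{a_t,\dots,b_t-1\}$; these inversions are pairwise distinct since $j_t=f(a_t,b_t)$ and the $j_t$ are distinct. Rerouting $f(a_t,b_t)\coloneqq j_{t+1}$ for each $t$ and keeping $f$ unchanged on every other inversion gives an admissible function $f'$ (admissibility is exactly $j_{t+1}\in\{a_t,\dots,b_t-1\}$), with $f'\ne f$ (as $f'(a_1,b_1)=j_2\ne j_1$) and $\sum_{(a,b)}e_{f'(a,b)}=\sum_{(a,b)}e_{f(a,b)}$ because $\{j_2,\dots,j_m,j_1\}=\{j_1,\dots,j_m\}$ as multisets; so $f$ is not the unique admissible function with sum $\alpha$. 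Conversely, if $f'\ne f$ is admissible with $\sum_{(a,b)}e_{f'(a,b)}=\alpha$, set $S=\{(a,b)\in\Inv(w):f(a,b)\ne f'(a,b)\}\ne\emptyset$; for each $(a,b)\in S$ the edge $f(a,b)\to f'(a,b)$ lies in $G_f$, and since the summands over $(a,b)\notin S$ cancel we get $\sum_{(a,b)\in S}e_{f(a,b)}=\sum_{(a,b)\in S}e_{f'(a,b)}$, so these edges form a nonempty directed multigraph in which every vertex has equal in- and out-degree; such a multigraph contains a directed cycle, so $G_f$ is not acyclic. The main points requiring care are precisely the bookkeeping in the first direction --- that the inversions indexing a cycle of $G_f$ are genuinely distinct, so that $f'$ is well defined --- together with the standard fact that a balanced directed multigraph with at least one edge has a directed cycle; the remaining steps are just the assembly of the reductions above.
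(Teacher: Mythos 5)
Your argument is correct, and it reaches the statement by a more self-contained route than the paper. The paper's own proof is a two-line reduction: by \Cref{theorem:DualSchubSCNP} (equivalently \Cref{theorem:support}), $\supp(D^w)=\supp(\GW(w))$, so $\Newton(D^w)$ is the Newton polytope of a product of nonnegative linear forms, and the vertex characterization is then quoted directly from \cite[Theorem~3.5]{2009minkowski}. You perform the same reduction, writing $\Newton(D^w)$ as a Minkowski sum of the simplices $\conv\{e_a,\dots,e_{b-1}\}$ over $(a,b)\in\Inv(w)$, but instead of citing the external theorem you prove the needed vertex criterion from scratch: vertices of a Minkowski sum arise from simultaneous unique maximizers of a common linear functional, feasibility of the resulting system of strict inequalities is equivalent to acyclicity of your digraph $G_f$, and the crux equivalence that $G_f$ is acyclic if and only if $f$ is the unique admissible function with exponent $\alpha$ is established by the cycle-rerouting argument in one direction and the balanced-multigraph argument in the other. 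Both directions are sound, including the two points you flag: the inversions indexing the edges of a simple cycle are pairwise distinct (since $f$ takes distinct values on them), and a nonempty loopless directed multigraph with in-degree equal to out-degree at every vertex contains a directed cycle, whose edges all lie in $G_f$. Since the coefficient of $x^\alpha$ in $\GW(w)$ counts admissible functions with exponent $\alpha$, this is precisely the relevant case of the cited result, so your write-up amounts to an elementary proof of the black-boxed ingredient. What the paper's approach buys is brevity; what yours buys is a self-contained combinatorial argument in the spirit of the paper's stated aim of elementary proofs.
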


Furthermore, in light of \cite[Corollary~8.2]{postnikovpermutohedra}, we give a combinatorial characterization of the vertices of $\Newton(D^w)$ using rectangle tilings of staircase Young diagrams.

This paper is organized as follows. In \cref{section:preliminaries}, we give the necessary relevant background and definitions. In \cref{section:Mconvexity}, we prove that dual Schubert polynomials are M-convex. In \cref{section:vertices}, we characterize the vertices of Newton polytopes of dual Schubert polynomials in two different ways.

\section*{Acknowledgments}

This research was conducted through the University of Minnesota Combinatorics and Algebra REU and supported by NSF grant DMS-2053288 and the D.E.\ Shaw group. We are grateful to our mentor Shiyun Wang and our TA Meagan Kenney for their guidance, as well as Pavlo Pylyavskyy for his invaluable support throughout the project. We thank Ayah Almousa, Casey Appleton, Shiliang Gao, Yibo Gao, June Huh, Jonas Iskander, Victor Reiner, and Lauren Williams for helpful discussions. We also thank Grant Barkley, Meagan Kenney, Mitchell Lee, Pavlo Pylyavskyy, Victor Reiner, and Shiyun Wang for their feedback on earlier drafts of this paper.

\section{Preliminaries}\label{section:preliminaries}
\subsection{Bruhat order}
In this paper, we use standard terminology for the (strong) Bruhat order. The readers are referred to \cite[Chapter~2]{BjornerBrenti} for a more detailed exposition.

Let $S_n$ be the symmetric group on the set $[n] \coloneqq \{1, \dots, n\}$. 
We write each permutation $w\in S_n$ in one-line notation as $w = w(1)w(2)\cdots w(n)$. 

Each permutation $w \in S_n$ can be expressed as a product of \emph{simple transpositions} $s_i = \{(i, i+1) : 1\le i\le n-1\}$. If $w = s_{i_1}\cdots s_{i_{\ell}}$ is expressed as a product of simple transpositions with $\ell$ minimal among all such expressions, then the string $s_{i_1}\cdots s_{i_{\ell}}$ is called a \emph{reduced decomposition} of $w$. We call $\ell \coloneqq \ell(w)$ the \emph{(Coxeter) length} of $w$.
 
An \emph{inversion} of $w$ is an ordered pair $(a,b)\in [n]^2$ such that $a< b$ and $w(a)>w(b)$. We denote the set of all inversions of $w$ by $\Inv(w)$. It is well-known that $\ell(w) = |\Inv(w)|$. Given $1\le a < b\le n$, let $t_{ab}$ act on $w\in S_n$ such that $wt_{ab}$ is the permutation obtained by transposing the two numbers in positions $a$ and $b$ in $w$.

\begin{definition}(\cite{BjornerBrenti})\label{def:bruhatorder}
    We define a partial order $\le$ on $S_n$ called the \emph{strong Bruhat order} as follows. Let $u, v\in S_n$ and $\ell = \ell(v)$. 
    We have $u\le v$ if and only if for any reduced decomposition $s_{i_1}\cdots s_{i_{\ell}}$ of $v$, there exists a reduced decomposition $s_{j_1}\cdots s_{j_k}$ of $u$ such that $s_{j_1}\cdots s_{j_k}$ is a substring of $s_{i_1}\cdots s_{i_{\ell}}$. If additionally $\ell(v) = \ell(u) + 1$, we write $u\lessdot v$. Alternatively, we may characterize the covering relation as follows. For $u, v \in S_n$, we have $u \lessdot v$ if and only if $v = ut_{ab}$ and $\ell(v) = \ell(u) + 1$.
\end{definition}

For $u\le w$ in the Bruhat order of $S_n$, the \emph{(Bruhat) interval} $[u, w]$ is the subposet containing all $v\in S_n$ such that $u\le v\le w$. 

\subsection{Postnikov--Stanley polynomials}
We define Postnikov--Stanley polynomials, which generalize dual Schubert polynomials. 

\begin{definition}(\cite{PostnikovStanley})
    Given $u\le w $ in $ S_n$, the \emph{Postnikov--Stanley polynomial} $D_u^w$ is defined by
    \[D_u^w(x_1, \dots, x_{n-1}) \coloneqq \frac{1}{(\ell(w) - \ell(u))!}\sum_{C} m_C(x_1, \dots, x_{n-1}),\]
    where the sum is over all saturated chains $C$ from $u$ to $w$.
\end{definition}
As defined in the introduction, the \emph{dual Schubert polynomial} $D^w$ is given by $D^w\coloneq D_{\id}^w$.
\begin{example}\label{ex:PS}
    Let $u = 213$ and $w = 321$. In the Bruhat order of $S_3$, there are two saturated chains from $u$ to $w$, namely $213\lessdot 231\lessdot 321$ and $213\lessdot 312\lessdot 321$, as shown in \cref{fig:123to321}. The first chain has weight $x_1x_2$ and the second chain has weight $(x_1 + x_2) \cdot x_2$. Thus, $D_{213}^{321} = \frac{1}{2!}(x_1 x_2 + (x_1 + x_2)\cdot x_2)$. 
\end{example}

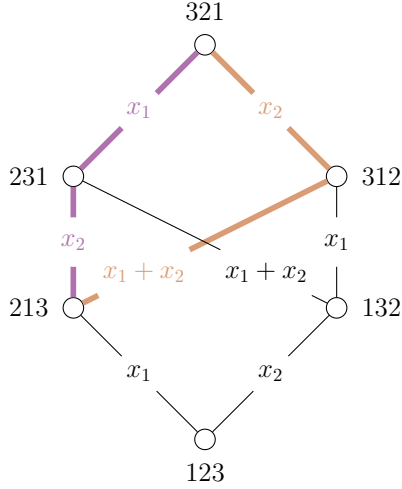
\begin{figure}
    \centering
    \begin{tikzpicture}[auto=center,every node/.style={circle, fill=white,scale=0.7,draw=black, solid}, scale=1.75, label distance=0.5mm]
        \node (a) at (0, 0) {};
        \node[label=left:{\large $213$}] (b) at (-1, 1) {};
        \node[label=right:{\large $132$}] (d) at (1, 1) {};
        \node[label=left:{\large $231$}] (e) at (-1, 2) {};
        \node[label=right:{\large $312$}] (g) at (1, 2) {};
        \node (h) at (0, 3) {};

        \node[draw=none, fill=none] at ($(a)-(0, 0.25)$) {\large $123$};
        \node[draw=none, fill=none] at ($(h)+(0, 0.25)$) {\large $321$};
        
        \draw (a)--node [draw=none] {\large $x_1$}(b);
        \draw (a)--node [draw=none] {\large $x_2$}(d);
        \draw[line width=0.75mm, \colorA] (b)--node [draw=none] {\large $x_2$}(e);
        \draw[line width=0.75mm, \colorA] (e)--node [draw=none] {\large $x_1$}(h);
        \draw[line width=0.75mm, \colorB] (g)--node [draw=none] {\large $x_2$}(h);
        \draw (d)--node [draw=none] {\large $x_1$}(g);
        \draw[line width=0.75mm, \colorB] (b)--node [near start, draw=none] {\large $x_1 + x_2$}(g);
        \draw (d)--node [near start, draw=none] {\large $x_1 + x_2$}(e);
    \end{tikzpicture}
    \caption{Calculating the Postnikov--Stanley polynomial $D_{213}^{321}$.}
    \label{fig:123to321}
\end{figure}

\subsection{Newton polytopes}
We rigorize the parenthetical definitions of the Newton polytope given in the introduction. 
\begin{definition}\label{def:expvector}
    For a tuple $\alpha = (\alpha_1, \dots, \alpha_n)\in \ZZ_{\ge 0}^n$, let $x^\alpha$ denote the monomial 
    \[x^\alpha \coloneqq x_1^{\alpha_1}\cdots x_n^{\alpha_n}\in \R[x_1, \dots, x_n].\]
    We call $\alpha$ the \emph{exponent vector} of $x^{\alpha}$.
\end{definition}
\begin{definition}\label{def:supp}
Let $f = \sum_{\alpha \in \ZZZ^{n}} c_{\alpha} x^{\alpha} \in \R[x_{1}, \ldots, x_{n}]$ be a polynomial. 
    The \emph{support} of $f$, denoted $\supp(f)$, is the set of its {exponent vectors} $\alpha$.
\end{definition}
\begin{definition}\label{def:newton}
    The \emph{Newton polytope} of a polynomial $f\in \R[x_1, \dots, x_n]$, denoted $\Newton(f)$, is the convex hull of $\supp(f)$ in $\mathbb{R}^n$.
\end{definition}

The support and Newton polytope of a polynomial behave nicely with respect to polynomial addition and multiplication as follows. 
\begin{proposition}\label{prop:suppsumprod}
    For two polynomials $f, g\in\R_{\ge 0}[x_1, \dots, x_n]$, we have 
    \begin{align*}
        \supp(fg) &=\supp(f)+\supp(g)\\
        \supp(f+g) &=\supp(f)\cup\supp(g),
    \end{align*}
     where $+$ in the first line denotes the Minkowski sum.
\end{proposition}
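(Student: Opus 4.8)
The plan is to expand both sides directly in terms of coefficients and to use the nonnegativity hypothesis to rule out any cancellation. Write $f = \sum_{\alpha \in \ZZZ^n} c_\alpha x^\alpha$ and $g = \sum_{\beta \in \ZZZ^n} d_\beta x^\beta$, where all $c_\alpha, d_\beta \ge 0$ by assumption.

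For the additive identity, I would observe that $f + g = \sum_{\gamma} (c_\gamma + d_\gamma) x^\gamma$, so by \Cref{def:supp} we have $\gamma \in \supp(f+g)$ if and only if $c_\gamma + d_\gamma \ne 0$. Since $c_\gamma \ge 0$ and $d_\gamma \ge 0$, the sum $c_\gamma + d_\gamma$ vanishes precisely when both $c_\gamma$ and $d_\gamma$ vanish; equivalently, $c_\gamma + d_\gamma \ne 0$ if and only if $c_\gamma \ne 0$ or $d_\gamma \ne 0$, i.e.\ $\gamma \in \supp(f) \cup \supp(g)$.

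For the multiplicative identity, I would expand $fg = \sum_{\gamma} \bigl( \sum_{\alpha + \beta = \gamma} c_\alpha d_\beta \bigr) x^\gamma$, so that $\gamma \in \supp(fg)$ if and only if the inner sum $\sum_{\alpha + \beta = \gamma} c_\alpha d_\beta$ is nonzero. Every summand $c_\alpha d_\beta$ is a product of nonnegative reals, hence nonnegative, so the inner sum is nonzero if and only if at least one summand $c_\alpha d_\beta$ is nonzero; that is, if and only if there exist $\alpha \in \supp(f)$ and $\beta \in \supp(g)$ with $\alpha + \beta = \gamma$. By definition of the Minkowski sum, this last condition is exactly $\gamma \in \supp(f) + \supp(g)$, completing the argument.

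There is no genuine obstacle here; the content of the proposition is really just the nonnegativity hypothesis, and the point worth flagging is exactly why it cannot be dropped. For arbitrary real coefficients one obtains only the inclusions $\supp(fg) \subseteq \supp(f) + \supp(g)$ and $\supp(f+g) \subseteq \supp(f) \cup \supp(g)$, since a sum of products (respectively, a sum) of coefficients associated to a fixed monomial could cancel to zero even when individual terms do not. Requiring $f, g \in \R_{\ge 0}[x_1, \dots, x_n]$ forces each such aggregate coefficient to be a sum of nonnegative terms, which vanishes only when all of its terms vanish, and this is precisely what upgrades the inclusions to equalities.
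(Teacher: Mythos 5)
Your proof is correct: the coefficient-by-coefficient expansion together with the observation that nonnegative terms cannot cancel is exactly the standard argument, and the paper itself states this proposition without proof, treating it as routine. Your closing remark about why nonnegativity is needed (and that only inclusions survive for general real coefficients) is accurate and matches the reason the hypothesis $f,g\in\R_{\ge 0}[x_1,\dots,x_n]$ appears in the statement.
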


\begin{proposition}(\cite{NewtonPolyinAC})\label{prop:Newtonsumprod}
    For two polynomials $f, g\in\R_{\ge 0}[x_1, \dots, x_n]$, we have 
    \begin{align*}
        \Newton(fg) &=\Newton(f)+\Newton(g)\\
        \Newton(f+g) &=\conv(\Newton(f)\cup\Newton(g)),
    \end{align*}
     where $+$ in the first line denotes the Minkowski sum, and $\conv$ denotes the convex hull.
\end{proposition}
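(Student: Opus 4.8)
The plan is to reduce both identities to the corresponding statements about supports in \Cref{prop:suppsumprod}, combined with two elementary facts about convex hulls: that taking convex hulls commutes with Minkowski sums, and that $\conv(A\cup B) = \conv(\conv(A)\cup\conv(B))$ for arbitrary subsets $A,B\subseteq\R^n$. Since $f,g$ have nonnegative coefficients there is no cancellation, so \Cref{prop:suppsumprod} applies verbatim, and $\Newton(h) = \conv(\supp(h))$ by \Cref{def:newton} for any polynomial $h$.

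For the product formula I would first establish the set-theoretic identity $\conv(A+B) = \conv(A) + \conv(B)$. The inclusion $\supseteq$ holds because $\conv(A)+\conv(B)$ is convex (a Minkowski sum of convex sets) and contains $A+B$, hence contains the convex hull of $A + B$. For $\subseteq$, write a typical element of $\conv(A)+\conv(B)$ as $\sum_i \lambda_i a_i + \sum_j \mu_j b_j$ with $a_i\in A$, $b_j\in B$, $\lambda_i,\mu_j\ge 0$, and $\sum_i\lambda_i = \sum_j\mu_j = 1$; then
\[
\sum_i \lambda_i a_i + \sum_j \mu_j b_j \;=\; \sum_{i,j}\lambda_i\mu_j\,(a_i+b_j),
\]
which is a convex combination of the points $a_i+b_j\in A+B$ since $\sum_{i,j}\lambda_i\mu_j = \bigl(\sum_i\lambda_i\bigr)\bigl(\sum_j\mu_j\bigr) = 1$. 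Applying this with $A = \supp(f)$ and $B = \supp(g)$ and invoking \Cref{prop:suppsumprod} gives $\Newton(fg) = \conv(\supp(f)+\supp(g)) = \conv(\supp(f))+\conv(\supp(g)) = \Newton(f)+\Newton(g)$.

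For the sum formula the relevant fact is $\conv(A\cup B) = \conv(\conv(A)\cup\conv(B))$. One direction is immediate from $A\cup B\subseteq \conv(A)\cup\conv(B)$ after applying $\conv$; the other follows from $\conv(A)\cup\conv(B)\subseteq\conv(A\cup B)$ (monotonicity of $\conv$, since $A,B\subseteq A\cup B$) together with idempotence of $\conv$. Taking $A=\supp(f)$, $B=\supp(g)$ and using $\supp(f+g)=\supp(f)\cup\supp(g)$ from \Cref{prop:suppsumprod} yields $\Newton(f+g)=\conv(\supp(f)\cup\supp(g))=\conv(\conv(\supp(f))\cup\conv(\supp(g)))=\conv(\Newton(f)\cup\Newton(g))$.

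There is no real obstacle here: the statement is classical and a reference is already cited in the excerpt. The only point requiring any care is the distribution step $\sum_i\lambda_i a_i+\sum_j\mu_j b_j=\sum_{i,j}\lambda_i\mu_j(a_i+b_j)$ and the bookkeeping that the new coefficients are nonnegative and sum to $1$; everything else is formal manipulation of convex hulls and Minkowski sums, with the nonnegativity hypothesis on $f$ and $g$ entering only through \Cref{prop:suppsumprod}.
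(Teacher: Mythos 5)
Your proof is correct and complete. The paper itself does not prove this proposition---it is quoted from \cite{NewtonPolyinAC}---so there is no internal argument to compare against; what you give is the standard self-contained derivation: reduce to \Cref{prop:suppsumprod} (valid because nonnegativity of the coefficients rules out cancellation) and then use the two convexity identities $\conv(A+B)=\conv(A)+\conv(B)$ and $\conv(A\cup B)=\conv(\conv(A)\cup\conv(B))$, both of which you verify correctly. Two small remarks. First, the labels ``$\supseteq$'' and ``$\subseteq$'' in your product argument are swapped relative to the identity as you display it: the convexity-of-$\conv(A)+\conv(B)$ argument gives $\conv(A+B)\subseteq\conv(A)+\conv(B)$, while the computation $\sum_i\lambda_i a_i+\sum_j\mu_j b_j=\sum_{i,j}\lambda_i\mu_j(a_i+b_j)$ gives the reverse inclusion; since both inclusions are in fact proved, this is only a labeling slip. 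Second, the product identity $\Newton(fg)=\Newton(f)+\Newton(g)$ actually holds for arbitrary real polynomials (extreme monomials cannot cancel), whereas the sum identity genuinely needs the nonnegativity hypothesis; in your argument the hypothesis enters only through \Cref{prop:suppsumprod}, exactly as you note, which is the right place for it given how the paper states that proposition.
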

\begin{definition}(\cite{NewtonPolyinAC})
    A polynomial $f \in \R[x_1, \dots, x_n]$ has \emph{saturated Newton polytope (SNP)} if $\supp(f)$ consists of all integer points in $\Newton(f)$.    
\end{definition}
\begin{example}\label{ex:snp}
    As computed in \cref{ex:PS}, $D_{213}^{321} = x_1x_2+\frac{1}{2}x_2^2$, so its Newton polytope $\Newton(D_{213}^{321})$ is the line segment from $(1, 1)$ to $(0, 2)$ in $\R^2$. There are no integer points on this line segment besides the endpoints, so $D_{213}^{321}$ has SNP.
\end{example}

\begin{remark}
    A nonexample for SNP is the polynomial
    \[f = x^{(0, 1, 3)} + x^{(0,3,1)} + x^{(1,0,3)} + x^{(1,3,0)} + x^{(3,0,1)} + x^{(3,1,0)},\] 
    because $\Newton(f)$ contains the integer point $(0, 2, 2)$, but there is no $cx^{(0, 2, 2)}$ monomial in $f$ for $c$ nonzero.
\end{remark}

\subsection{Matroid polytopes}\label{section:matroidpolytopes}

Matroid polytopes, a special type of Newton polytope, were heavily used in \cite{DoubleSchSNP} to prove that double Schubert polynomials have SNP and characterize their Newton polytopes. We also leverage the following results about matroid polytopes to discuss products of linear polynomials in \cref{section:SCNP} and characterize the Newton polytope of dual Schubert polynomials in \cref{section:newtongeneralizedpermutahedra}.

\begin{definition}
    A \emph{matroid} $M = (E, \B)$ consists of a finite set $E$ and a nonempty collection of subsets $\B$ of $E$, called the \emph{bases} of $M$, which satisfy the \emph{basis exchange axiom}: if $B_1, B_2\in \B$ and $b_1\in B_1 \setminus B_2$, then there exists $b_2\in B_2 \setminus B_1$ such that $B_1 \setminus \{b_1\}\cup \{b_2\}\in \B$.
\end{definition}

In the remainder of this section, we let $E = [n]$.

\begin{definition}
    The \emph{matroid polytope} $P(M)$ of a matroid $M = ([n], \B)$ is 
    \[P(M) = \conv(\{\zeta^B : B\in \B\}),\]
    where $\zeta^B = (\1_{i\in B})_{i=1}^n$ denotes the indicator vector of $B$.
\end{definition}

Generalized permutahedra were first studied in \cite{postnikovpermutohedra}, and many nice connections to matroid polytopes have been found since.

A \emph{standard permutahedron} (or \emph{permutohedron}) is the convex hull in $\R^n$ of the vector $(0, 1, \dots, n-1)$ and all permutations of its entries. \emph{Generalized permutahedra}, which are deformations of standard permutahedra, are defined as follows.

\begin{definition}
    The \emph{generalized permutahedron} $P_n^z(\{z_I\})$ associated to the collection of real numbers $\{z_I\}$ for $I\subseteq [n]$, is given by
    \[P_n^z(\{z_I\}) = \left\{t\in\R^n : \sum_{i\in I} t_i\ge z_I \text{ for } I\neq [n], \sum_{i=1}^n t_i = z_{[n]}\right\}.\]
\end{definition}

\begin{proposition}(\cite{MatroidPolytopes})\label{prop:permutahedraminkowskisum}
    Generalized permutahedra are closed under Minkowski sums:
    \[P_n^z(\{z_I\}) + P_n^z(\{z_I'\}) = P_n^z(\{z_I + z_I'\}).\]
\end{proposition}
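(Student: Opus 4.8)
The plan is to prove the two inclusions separately; the forward inclusion is an immediate unwinding of the definitions, and all of the content lies in the reverse inclusion, which I would deduce from the greedy (vertex) description of generalized permutahedra.

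\emph{Forward inclusion.} If $t\in P_n^z(\{z_I\})$ and $t'\in P_n^z(\{z_I'\})$, then adding the defining inequalities gives $\sum_{i\in I}(t_i+t_i')\ge z_I+z_I'$ for every $I\neq[n]$, and adding the two equality constraints gives $\sum_{i=1}^n(t_i+t_i')=z_{[n]}+z_{[n]}'$. Hence $t+t'\in P_n^z(\{z_I+z_I'\})$, so $P_n^z(\{z_I\})+P_n^z(\{z_I'\})\subseteq P_n^z(\{z_I+z_I'\})$.

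\emph{Reverse inclusion.} I would use the greedy description of a generalized permutahedron: for a linear order $\pi=(\pi_1,\dots,\pi_n)$ of $[n]$, let $v^\pi=v^\pi(\{z_I\})$ be the point determined by the partial-sum conditions $v^\pi_{\pi_1}+\dots+v^\pi_{\pi_k}=z_{[n]}-z_{\{\pi_{k+1},\dots,\pi_n\}}$ for $1\le k\le n$ (with $z_\emptyset\coloneqq 0$, so the $k=n$ condition recovers $\sum_i v^\pi_i=z_{[n]}$). Equivalently, $v^\pi_i=z_{\{i\}\cup A_i}-z_{A_i}$, where $A_i$ is the set of elements appearing after $i$ in $\pi$; this formula is \emph{linear} in the defining data $(z_I)_{I\subseteq[n]}$. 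Now take $s\in P_n^z(\{z_I+z_I'\})$. Since this polytope is the convex hull of its greedy vertices $v^\pi(\{z_I+z_I'\})$, and since by linearity $v^\pi(\{z_I+z_I'\})=v^\pi(\{z_I\})+v^\pi(\{z_I'\})$ with $v^\pi(\{z_I\})\in P_n^z(\{z_I\})$ and $v^\pi(\{z_I'\})\in P_n^z(\{z_I'\})$, each such vertex lies in the Minkowski sum $P_n^z(\{z_I\})+P_n^z(\{z_I'\})$. That sum is convex, so it contains the convex hull of those vertices, which is $P_n^z(\{z_I+z_I'\})$, and in particular contains $s$. (The same argument can be phrased dually via support functions: $h_{P+Q}=h_P+h_Q$, a polytope is determined by its support function, and for generic $c$ with $c_{\pi_1}>\dots>c_{\pi_n}$ an Abel-summation computation $\langle c,t\rangle=\sum_{k=1}^{n-1}(c_{\pi_k}-c_{\pi_{k+1}})\,(t_{\pi_1}+\dots+t_{\pi_k})+c_{\pi_n}z_{[n]}$ shows $h_{P_n^z(\{z_I\})}(c)$ is affine-linear in $(z_I)_I$ with coefficients depending only on $c$, whence additivity in $(z_I)_I$ holds on generic $c$ and then on all $c$ by continuity.)

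The one genuinely nontrivial input is from the theory of generalized permutahedra developed in \cite{postnikovpermutohedra}: that, under the standing hypothesis on $\{z_I\}$ making $P_n^z(\{z_I\})$ an honest generalized permutahedron (e.g. supermodularity of $I\mapsto z_I$), the greedy point $v^\pi$ is feasible and $P_n^z(\{z_I\})=\conv\{v^\pi:\pi\text{ a linear order of }[n]\}$. Granting this, the proposition follows formally from the linearity of the greedy construction in the defining data. I expect the only fiddly points to be bookkeeping around degenerate directions $c$ (handled by continuity and closedness of polytopes) and, if one insists on stating the proposition for fully arbitrary collections $\{z_I\}$ rather than within Postnikov's convention, the degenerate cases where one of the polytopes is empty — which is why the natural hypothesis is that the $z_I$ describe actual generalized permutahedra.
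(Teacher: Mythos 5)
The paper does not actually prove this proposition---it is imported from \cite{MatroidPolytopes} with no internal argument---so there is nothing of the paper's own to compare against; judged on its merits, your proof is correct and follows one of the two standard routes (the greedy/vertex description, with the support-function argument of your parenthetical being the other common one). Two points deserve tightening. First, when you write $P_n^z(\{z_I+z_I'\})$ as the convex hull of its greedy points $v^\pi(\{z_I+z_I'\})$, you are implicitly using that the summed data $\{z_I+z_I'\}$ again satisfies the standing hypothesis under which the greedy description is valid; this is immediate (a sum of supermodular functions is supermodular), but it is precisely the step that transfers the hypothesis from the two summands to the sum, so it should be stated. Second, your closing caveat slightly mislocates the failure mode for arbitrary $\{z_I\}$: emptiness is not the only obstruction, since the identity already fails for nonempty polytopes whose data are not tight. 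For instance, with $n=3$ take $z_{[3]}=0$, $z_{\{i\}}=0$, $z_{\{i,j\}}=-1$, so that $P_n^z(\{z_I\})=\{(0,0,0)\}$ with the pairwise constraints slack, and let $\{z_I'\}$ be the tight data of the hexagon $\conv\{\text{permutations of }(0,1,2)\}$; then the Minkowski sum is that hexagon, while $P_n^z(\{z_I+z_I'\})$ is the full triangle $\conv\{(3,0,0),(0,3,0),(0,0,3)\}$. So the right standing hypothesis is tightness/supermodularity of the data (not merely nonemptiness), which is the convention under which your argument operates and under which the result is used in this paper: there the inputs are sums of the matroid data $\1_{I\supseteq[a,b)}$ coming from \cref{prop:matroidsarepermutahedra}, which are tight and supermodular, so the application is legitimate.
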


\begin{definition}\label{def:rankfunction}
    For a matroid $M = ([n], \B)$, the \emph{rank function} $r_M: 2^{[n]}\to\ZZ_{\ge 0}$ on subsets of $[n]$ is given by
    \[r_M(S) = \max\{\#(S\cap B) : B\in\B\}.\]
\end{definition}

\begin{proposition}(\cite{MatroidPolytopes})\label{prop:matroidsarepermutahedra}
    Matroid polytopes are generalized permutahedra with
    \begin{align*}
        P(M) &= P_n^z(\{r_M([n]) - r_M([n]\setminus I)\}_{I\subseteq [n]})\\
        &= \left\{t\in\R^n : \sum_{i\in I} t_i\le r_M(I) \text{ for } I\neq [n], \sum_{i=1}^n t_i = r_M([n])\right\}.
    \end{align*}
\end{proposition}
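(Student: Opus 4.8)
The plan is to establish the two displayed equalities in turn. The equality between the generalized-permutahedron description $P_n^z(\{z_I\})$, where $z_I = r_M([n]) - r_M([n]\setminus I)$, and the rank-inequality description is purely formal: taking $I = [n]$ gives $z_{[n]} = r_M([n])$, so the affine constraint reads $\sum_{i=1}^n t_i = r_M([n])$; and for a proper subset $I \subsetneq [n]$, substituting this into $\sum_{i\in I} t_i \ge z_I$ shows it is equivalent to $\sum_{i\in [n]\setminus I} t_i \le r_M([n]\setminus I)$. As $I$ runs over the proper subsets of $[n]$, the complement $[n]\setminus I$ runs over the nonempty subsets, the inequality indexed by $[n]$ itself is implied by the affine constraint, and the one indexed by $\emptyset$ is the trivial $0 \le 0$. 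So the two right-hand descriptions cut out the same set, which I call $Q$. (One can also check that $z$ is supermodular, using submodularity of $r_M$ on complemented sets, so that $P_n^z(\{z_I\})$ is genuinely a nonempty generalized permutahedron, consistent with the first assertion of the statement.)

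It remains to show $P(M) = \conv\{\zeta^B : B\in\B\}$ equals $Q$. For $P(M)\subseteq Q$: each generator $\zeta^B$ satisfies $\sum_{i\in I}\zeta^B_i = \#(B\cap I)\le r_M(I)$ by definition of the rank function and $\sum_i \zeta^B_i = \#B = r_M([n])$, so every $\zeta^B$ lies in $Q$; since $Q$ is convex, $P(M)\subseteq Q$. Conversely, $Q$ is bounded — from $I=\{i\}$ we get $t_i\le r_M(\{i\})\le 1$, and from $\sum_{j\ne i}t_j\le r_M([n]\setminus\{i\})\le r_M([n])=\sum_j t_j$ we get $t_i\ge 0$ — hence $Q$ is a polytope, so it suffices to prove that every vertex $v$ of $Q$ is the indicator vector $\zeta^B$ of a basis.

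Fix a vertex $v$ of $Q$ and consider its family of \emph{tight sets} $\mathcal T = \{I\subseteq[n] : \sum_{i\in I}v_i = r_M(I)\}$, which contains $\emptyset$ and (by the affine constraint) $[n]$. Submodularity of $r_M$ and modularity of the set function $I\mapsto\sum_{i\in I}v_i$ force the standard uncrossing estimate: for $I, J\in\mathcal T$, $r_M(I)+r_M(J) = \sum_{i\in I}v_i+\sum_{i\in J}v_i = \sum_{i\in I\cup J}v_i+\sum_{i\in I\cap J}v_i \le r_M(I\cup J)+r_M(I\cap J)\le r_M(I)+r_M(J)$, so equality holds throughout and $I\cup J, I\cap J\in\mathcal T$; thus $\mathcal T$ is a sublattice of $2^{[n]}$. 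Since $v$ is a vertex, the tight constraints span $\R^n$, i.e. $\{\zeta^I : I\in\mathcal T\}$ spans $\R^n$; a standard fact about spanning sublattices of the Boolean lattice then yields a maximal chain $\emptyset = T_0\subsetneq T_1\subsetneq\cdots\subsetneq T_n = [n]$ inside $\mathcal T$ with $\#T_k = k$. For the unique element $i_k\in T_k\setminus T_{k-1}$ we get $v_{i_k} = r_M(T_k)-r_M(T_{k-1})\in\{0,1\}$, so $v\in\{0,1\}^n$. Writing $B = \{i : v_i = 1\}$, the inequality indexed by $I=B$ reads $\#B\le r_M(B)\le \#B$, so $B$ is independent, and $\#B=\sum_i v_i = r_M([n])$ makes $B$ a basis; hence $v=\zeta^B\in P(M)$. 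This gives $Q\subseteq P(M)$ and finishes the proof.

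The only place matroid structure does real work — and hence the main obstacle — is the vertex-is-$0/1$ argument of the last paragraph: the uncrossing step uses submodularity of $r_M$ essentially, and extracting the spanning chain from the lattice $\mathcal T$ is the combinatorial heart of Edmonds' description of the matroid base polytope. Everything else is complementation and elementary linear algebra.
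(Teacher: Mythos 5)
The paper does not prove this proposition at all: it is quoted from the cited reference on matroid polytopes, so there is no in-paper argument to compare against. Your write-up is the standard Edmonds-style proof of the base-polytope description, and it is correct in outline: the complementation argument identifying the two right-hand descriptions is routine and accurately handled (including the trivial $\emptyset$ and $[n]$ constraints), the inclusion $P(M)\subseteq Q$ is immediate from $\#(B\cap I)\le r_M(I)$ and the fact that all bases have size $r_M([n])$, and the converse correctly reduces, via boundedness, to showing every vertex of $Q$ is a basis indicator. Two points deserve flagging, since they carry essentially all the content of the cited result. First, you invoke submodularity of $r_M$; with the paper's definition $r_M(S)=\max\{\#(S\cap B):B\in\B\}$, submodularity is not formal and itself requires an argument from the basis exchange axiom, so it should be cited or proved. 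Second, the passage from ``the tight sets $\mathcal{T}$ form a lattice whose indicator vectors span $\R^n$'' to ``$\mathcal{T}$ contains a complete chain $\emptyset=T_0\subsetneq\cdots\subsetneq T_n=[n]$ with $\#T_k=k$'' rests on the standard lemma that a maximal chain in a lattice family spans the same subspace as the whole family; you name it as standard, which is fair, but it is the genuine combinatorial step. A small edge case worth a sentence: when $B=\{i:v_i=1\}$ equals $[n]$ there is no inequality indexed by $B$, but then $\sum_i v_i=r_M([n])=n$ forces $[n]$ itself to be a basis, so the conclusion still holds. With those caveats your proof is a legitimate self-contained substitute for the citation, and indeed more informative than the paper, which uses the proposition as a black box.
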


\begin{definition}(\cite{DicreteConvexAnaly})
A subset $\mathcal{J} \subset \ZZ^n$ is \emph{M-convex} if for any index $i \in [n]$ and any $\alpha, \beta \in \mathcal{J}$ whose $i$th coordinates satisfy $\alpha_i > \beta_i$, there is an index $j \in [n]$ satisfying
\[
\alpha_j < \beta_j \quad \text{and} \quad \alpha - e_i + e_j \in \mathcal{J} \quad \text{and} \quad \beta - e_j + e_i \in \mathcal{J}.
\]
    A polynomial is \emph{M-convex} if $\supp(f)$ is an M-convex set.
\end{definition}
\begin{proposition}(\cite[Theorem 4.15]{DicreteConvexAnaly}, \cite{HuhMeszaroLorSchur})\label{prop:Mconvex=SNP+GP}
    A homogeneous polynomial $f$ is M-convex if and only if $f$ has SNP and $\Newton(f)$ is a generalized permutahedron.
\end{proposition}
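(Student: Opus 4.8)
The plan is to reduce the statement to the standard dictionary, supplied by discrete convex analysis, between M-convex sets and lattice points of integral generalized permutahedra, and then to match these objects with $\supp(f)$ and $\Newton(f)$. The one substantive input, which I would cite rather than reprove, is the following: a nonempty finite set $\mathcal{J}\subseteq\ZZ^n$ is M-convex if and only if there is an integral generalized permutahedron $P$ (equivalently, the base polytope of a discrete polymatroid) with $\mathcal{J}=P\cap\ZZ^n$, and in that case $\conv(\mathcal{J})=P$. This is essentially \cite[Theorem~4.15]{DicreteConvexAnaly}; along the way it uses the elementary fact that an M-convex set lies on a single hyperplane $\sum_i t_i=d$, consistent with the homogeneity of $f$ (each allowed exchange $\alpha\mapsto\alpha-e_i+e_j$ preserves coordinate sums, and iterating the exchange axiom links any two elements of $\mathcal{J}$).

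For the forward direction, suppose $f$ is M-convex, so $\supp(f)$ is an M-convex set. By the cited equivalence, $\supp(f)=P\cap\ZZ^n$ for some integral generalized permutahedron $P$, and hence $\Newton(f)=\conv(\supp(f))=P$. Thus $\Newton(f)$ is a generalized permutahedron and $\supp(f)=\Newton(f)\cap\ZZ^n$, which is precisely the SNP property. For the converse, suppose $f$ has SNP and $\Newton(f)$ is a generalized permutahedron. Since $\Newton(f)=\conv(\supp(f))$ is the convex hull of finitely many lattice points, it is an integral polytope, hence an integral generalized permutahedron; and SNP says $\supp(f)=\Newton(f)\cap\ZZ^n$. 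The cited equivalence then yields that $\supp(f)$ is M-convex, i.e.\ $f$ is M-convex.

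The only real obstacle is the cited characterization of M-convexity in terms of integral generalized permutahedra; once that is granted, the argument is just bookkeeping about convex hulls of finite lattice sets (in particular, the fact that an integral polytope is recovered as the convex hull of its lattice points). Since \cite[Theorem~4.15]{DicreteConvexAnaly} together with \cite{HuhMeszaroLorSchur} already establish this, in the write-up I would present the above as a short reduction and defer to those references for the hard content.
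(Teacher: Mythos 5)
Your proposal is correct and matches what the paper does: the paper states this proposition without proof, deferring entirely to \cite[Theorem 4.15]{DicreteConvexAnaly} and \cite{HuhMeszaroLorSchur}, and your argument is exactly that citation plus the routine translation between $\supp(f)$, $\Newton(f)$, and lattice points of an integral generalized permutahedron. The bookkeeping you supply (constant coordinate sum from the exchange axiom, integrality of $\Newton(f)$, recovering $P$ as $\conv(P\cap\ZZ^n)$) is sound.
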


\section{M-convexity of dual Schubert polynomials}\label{section:Mconvexity}
In this section, we prove \Cref{theorem:support} and \Cref{thm:M-conv}. Our key insight is that the support of any dual Schubert polynomial $D^w$ always matches the support of the weight of a specific chain in $[\id,w]$. After showing that the weight of any chain has SNP and has a Newton polytope that is a generalized permutahedron, we obtain the desired result.
\subsection{The single chain Newton polytope (SCNP) property}\label{section:SCNP}
We introduce the {single chain Newton polytope} property and show that it implies, but is not equivalent to, the SNP property.

\begin{definition} \label{SCNP}
    Given $u \le w$ in $S_n$, the Postnikov--Stanley polynomial $D_{u}^w$ has \emph{single chain Newton polytope (SCNP)} if there exists a saturated chain $C$ in the interval $[u, w]$ such that
    \[\supp(m_C) = \supp(D_u^w).\]
    Such a saturated chain $C$ is called a \emph{dominant chain} of the interval $[u, w]$.
\end{definition}
\begin{example}
    Given the Postnikov--Stanley polynomial $D_{213}^{321} = \frac{1}{2!}(x_1 x_2 + (x_1 + x_2)\cdot x_2)$, the saturated chain $C = (213 \lessdot 312 \lessdot 321)$ has weight $m_C = (x_1 + x_2)\cdot x_2$, which satisfies 
    \[\supp(m_C) = \supp(D_{213}^{321}).\] 
    Thus, $C$ is a dominant chain of $[213, 321]$, and $D_{213}^{321}$ has SCNP. 
\end{example}

\begin{remark}\label{example:13244231}
    The SCNP property is stronger than the SNP property: for example, $D_{1324}^{4231}$ has SNP but not SCNP, according to calculations with SageMath \cite{sagemath}. As Postnikov--Stanley polynomials do not necessarily have SCNP, the method of using SCNP to prove SNP for dual Schubert polynomials does not generalize to Postnikov--Stanley polynomials. 
\end{remark}

The following propositions motivate the definition of SCNP.

\begin{proposition}\label{prop:linearprodSNP}
    If a polynomial $f\in \R[x_1,\dots,x_n]$ can be written as a product of nonnegative linear combinations of $x_1, \dots, x_n$, then $f$ has SNP.
\end{proposition}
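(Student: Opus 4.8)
The plan is to reduce the statement, via the multiplicativity of supports and Newton polytopes, to a lattice-point decomposition fact about Minkowski sums of simplices.

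First I would write $f = \ell_1 \cdots \ell_k$ where each $\ell_i = \sum_{j \in S_i} c_{i,j} x_j$ has $c_{i,j} > 0$ and $\varnothing \neq S_i \subseteq [n]$. Then $\supp(\ell_i) = \{e_j : j \in S_i\}$ and $\Newton(\ell_i) = \Delta_i \coloneqq \conv\{e_j : j \in S_i\}$, so iterating \Cref{prop:suppsumprod} and \Cref{prop:Newtonsumprod} gives the Minkowski-sum decompositions $\supp(f) = \sum_{i=1}^k \{e_j : j \in S_i\}$ and $\Newton(f) = \sum_{i=1}^k \Delta_i$. The inclusion $\supp(f) \subseteq \Newton(f) \cap \ZZ^n$ is automatic, so the entire content is the reverse inclusion: every integer point of $\sum_i \Delta_i$ is a sum $\sum_i e_{j_i}$ with $j_i \in S_i$.

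The key step is this reverse inclusion. Given $p \in \Newton(f) \cap \ZZ^n$, write $p = \sum_i p_i$ with $p_i \in \Delta_i$, say $p_i = \sum_{j \in S_i} \lambda_{i,j} e_j$ where $\lambda_{i,j} \ge 0$ and $\sum_j \lambda_{i,j} = 1$. Consider the polytope $Q$ of all matrices $(\mu_{i,j})_{i \in [k],\, j \in [n]}$ with $\mu_{i,j} \ge 0$, $\mu_{i,j} = 0$ for $j \notin S_i$, row sums $\sum_j \mu_{i,j} = 1$, and column sums $\sum_i \mu_{i,j} = p_j$. This is a transportation-type polytope: its constraint matrix is the incidence matrix of a bipartite graph and hence totally unimodular, so $Q$ is an integral polytope; it is nonempty since $(\lambda_{i,j}) \in Q$, and bounded by the row-sum constraints. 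Therefore $Q$ contains an integer point $(\mu_{i,j})$, and nonnegativity together with unit row sums forces, for each $i$, a unique $j_i \in S_i$ with $\mu_{i,j_i} = 1$. Then $\sum_i e_{j_i} = p$, so $p \in \supp(f)$, and $f$ has SNP.

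The main obstacle is precisely this lattice-point decomposition: moving from a fractional convex combination witnessing membership of $p$ in the Minkowski sum to one in which every summand is a vertex. Total unimodularity (equivalently, the integrality of network-flow polytopes, or a Hall-type argument) is exactly what makes this work. I would also note a slicker alternative packaging of the same idea: each $\Delta_i$ is the matroid polytope of a rank-one matroid, hence a generalized permutahedron by \Cref{prop:matroidsarepermutahedra}, so $\Newton(f)$ is a generalized permutahedron by \Cref{prop:permutahedraminkowskisum}; and since $\supp(\ell_i) = \{e_j : j \in S_i\}$ is trivially M-convex and Minkowski sums of M-convex sets are M-convex, $\supp(f)$ is M-convex, so $f$ has SNP by \Cref{prop:Mconvex=SNP+GP} (using that $f$ is homogeneous of degree $k$). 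Either route isolates the same combinatorial core.
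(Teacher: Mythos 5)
Your proof is correct, but it takes a genuinely different route from the paper: the paper disposes of this proposition in one line by citing \cite[Theorem 3.4]{2009minkowski}, whereas you give a self-contained argument. Your reduction to the lattice-point statement for Minkowski sums of the simplices $\conv\{e_j : j\in S_i\}$ is exactly the right content (and the hypotheses of \cref{prop:suppsumprod} and \cref{prop:Newtonsumprod} are met since all coefficients are nonnegative), and the transportation-polytope step is sound: the constraint matrix is the incidence matrix of a bipartite graph, hence totally unimodular, the right-hand sides $(1,\dots,1,p_1,\dots,p_n)$ are integral, and the polytope is nonempty and bounded, so it has an integral vertex, whose rows pick out the desired $j_i\in S_i$ with $\sum_i e_{j_i}=p$. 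What your approach buys is independence from the external reference, at the cost of a page of network-flow/unimodularity machinery; what the citation buys is brevity, since the quoted theorem is precisely this statement about products of linear forms with nonnegative coefficients. Your alternative packaging via M-convexity is also fine in outline, but note that it silently invokes another external theorem (Murota's result that Minkowski sums of M-convex sets are M-convex) that the paper does not state, so it is no more elementary than the citation it replaces; the transportation-polytope argument is the genuinely self-contained one. A trivial edge case worth a clause: if some $\ell_i=0$ then $f=0$ and there is nothing to prove, which is implicitly excluded by your assumption $S_i\neq\varnothing$.
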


\begin{proof}
    This follows from \cite[Theorem 3.4]{2009minkowski}.
\end{proof}
\begin{proposition}\label{prop:SCNPtoSNP}
    If $D_u^w$ has SCNP, then $D_u^w$ has SNP.
\end{proposition}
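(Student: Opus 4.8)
The plan is to deduce this directly from \Cref{prop:linearprodSNP} together with the elementary observation that equality of supports forces equality of Newton polytopes. First I would invoke the SCNP hypothesis (\Cref{SCNP}) to fix a dominant chain $C = (u = u_0 \lessdot u_1 \lessdot \cdots \lessdot u_k = w)$ of the interval $[u,w]$, so that $\supp(m_C) = \supp(D_u^w)$ by definition. The weight $m_C = m(u_0 \lessdot u_1)\cdots m(u_{k-1}\lessdot u_k)$ is a product of edge weights, each of the form $x_a + x_{a+1} + \cdots + x_{b-1}$; these are nonnegative (in fact $0/1$) linear combinations of $x_1,\dots,x_{n-1}$. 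Hence $m_C$ is a product of nonnegative linear combinations of the variables, and \Cref{prop:linearprodSNP} shows that $m_C$ has SNP --- that is, $\supp(m_C)$ is precisely the set of lattice points of $\Newton(m_C)$.

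To conclude, I would transfer this to $D_u^w$. Since the Newton polytope of a polynomial is the convex hull of its support (\Cref{def:newton}), the equality $\supp(m_C) = \supp(D_u^w)$ gives $\Newton(m_C) = \conv(\supp(m_C)) = \conv(\supp(D_u^w)) = \Newton(D_u^w)$. Therefore the lattice points of $\Newton(D_u^w)$ coincide with the lattice points of $\Newton(m_C)$, which by the previous step equal $\supp(m_C) = \supp(D_u^w)$. This is exactly the assertion that $D_u^w$ has SNP.

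I do not expect a genuine obstacle here: all of the difficulty has been pushed into \Cref{prop:linearprodSNP}. The only subtlety worth stating explicitly is that one never has to reason about the polytopes $\Newton(m_C)$ and $\Newton(D_u^w)$ separately, since they are literally equal as soon as the supports agree; thus the SCNP hypothesis does all the work of reducing an SNP statement about $D_u^w$ to an SNP statement about a single product of linear forms.
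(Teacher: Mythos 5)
Your proposal is correct and follows essentially the same route as the paper: fix a dominant chain $C$, apply \Cref{prop:linearprodSNP} to see that $m_C$ has SNP, and transfer SNP to $D_u^w$ via the equality of supports. You simply spell out the (correct) observation that equal supports force equal Newton polytopes, which the paper leaves implicit.
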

\begin{proof}
    By \cref{prop:linearprodSNP}, the chain weight $m_C$ has SNP for any saturated chain $C$ in the Bruhat order. If, by the definition of SCNP, there exists a saturated chain $C$ in the interval $[u, w]$ such that $\supp(m_C) = \supp(D_u^w)$, then $D_u^w$ also has SNP.
\end{proof}

\subsection{Using SCNP to prove SNP}\label{subsec:dualschubSCNP}

In this section, we prove that dual Schubert polynomials have SCNP. As a corollary, we obtain \Cref{theorem:support} and the result that dual Schubert polynomials have SNP. 

\begin{definition}
    For a Bruhat interval $[u,w]$, a saturated chain 
    \[u = w_0\lessdot w_1\lessdot w_2\lessdot \dots \lessdot w_{\ell} = w\] 
    from $u$ to $w$ is a \emph{greedy chain} if it satisfies the following condition for all $i\in [\ell]$: writing $w_{i}=w_{i-1}t_{ab}$ for $a < b$, there does not exist $w'_{i-1}\lessdot w_{i}$ with $w'_{i-1} \in [u,w]$ such that 
    \begin{enumerate}[label=\textnormal{(\roman*)}]
        \item $w_{i}=w'_{i-1}t_{ab'}$ for $b'>b$, or
        \item $w_{i}=w'_{i-1}t_{a'b}$ for $a'<a$.
    \end{enumerate}
\end{definition}

\begin{example}
    In the Bruhat interval $[123, 321]$, the saturated chain $123 \lessdot 132\lessdot 231 \lessdot 321$ is greedy, while $123 \lessdot 213 \lessdot 231 \lessdot 321$ is not. This chain fails to be greedy because \[w_2 = 231 = 213t_{23} = w_{1}t_{23}\] but $132 \lessdot w_2$ with $231 = 132t_{13}$, violating condition (ii). In general, greedy chains are not unique. For example, $123 \lessdot 213 \lessdot 312 \lessdot 321$ is another greedy chain in $[123,321]$.
\end{example}

\begin{lemma}\label{lem:existsgreedychain}
    There exists a greedy chain in every Bruhat interval $[u, w]$. 
\end{lemma}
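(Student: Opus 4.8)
The plan is to construct a greedy chain $u = w_0 \lessdot w_1 \lessdot \cdots \lessdot w_\ell = w$ by building it \emph{downward} from $w$, choosing each covering step as "extreme" as possible. Concretely, suppose $w_i, w_{i+1}, \dots, w_\ell$ have already been selected with $w_i \in [u,w]$ and $w_i > u$. Among all $w' \lessdot w_i$ with $w' \in [u,w]$, I want to choose $w_{i-1}$ so that the transposition $t_{ab}$ with $w_i = w_{i-1} t_{ab}$ is maximal in the sense that no competing $w'_{i-1} \in [u,w]$ covers into $w_i$ via $t_{ab'}$ with $b' > b$ or via $t_{a'b}$ with $a' < a$. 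The natural way to do this is: first pick, among all valid covers $w' \lessdot w_i$ in $[u,w]$, one for which the associated transposition $(a,b)$ has $b$ as large as possible; among those, take $a$ as small as possible. I claim such a choice automatically satisfies both greedy conditions (i) and (ii) at step $i$, essentially by maximality of $b$ and then minimality of $a$ — any violating $w'_{i-1}$ would exhibit a cover of $w_i$ with strictly larger $b$ (contradicting maximality) or, with the same $b$, strictly smaller $a$ (contradicting minimality).

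The first key point to verify is that this downward construction never gets stuck before reaching $u$: as long as $w_i > u$, there is at least one $w' \in [u,w]$ with $w' \lessdot w_i$. This follows from a standard property of the Bruhat order — every interval $[u, w_i]$ is graded, so from $w_i$ one can always descend by a covering step while staying $\ge u$ (e.g. take any saturated chain from $u$ to $w_i$ inside the interval and read off its top step). Hence the construction terminates with $w_0 = u$ after exactly $\ell(w) - \ell(u)$ steps, producing a genuine saturated chain.

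The second key point is the local greedy verification: the chain produced satisfies conditions (i) and (ii) at \emph{every} index. This is where I expect the real work to be, since the greedy condition at step $i$ is a statement purely about covers of $w_i$ inside $[u,w]$, and I must make sure the $b$-maximal, then $a$-minimal choice genuinely rules out both a "right-stretch" $t_{ab'}$ with $b' > b$ and a "left-stretch" $t_{a'b}$ with $a' < a$ from some other element of the interval. The delicate part is condition (ii): having fixed $b$ maximal, I need that no $w'_{i-1} \in [u,w]$ satisfies $w_i = w'_{i-1} t_{a'b}$ with $a' < a$; this should follow because such a $w'_{i-1}$ is itself a valid cover of $w_i$ in $[u,w]$ with the same value of $b$ but a smaller first index, directly contradicting the $a$-minimality of our choice. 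One must be slightly careful that the transposition recorded for $w'_{i-1} \lessdot w_i$ really is $t_{a'b}$ with the stated $a' < a$ — i.e. that the indices $(a',b)$ appearing in the greedy condition match the indices one reads off from $w'_{i-1} t_{a'b} = w_i$; but this is immediate from how $t_{ab}$ acts on one-line notation.

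Putting these together: the downward greedy selection (maximize $b$, then minimize $a$ at each step) always continues until it reaches $u$, and by the matching argument above it satisfies both conditions (i) and (ii) at each index, so it is a greedy chain. Reversing the order of the elements gives the desired saturated chain from $u$ to $w$.
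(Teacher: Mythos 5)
Your proposal is correct and takes essentially the same route as the paper: build the chain from $w$ downward, at each step choosing a cover of the current element inside $[u,w]$ whose transposition is extremal so that no competitor of the form $(a,b')$ with $b'>b$ or $(a',b)$ with $a'<a$ can exist (the paper phrases this as choosing a transposition that cannot be replaced by a longer one sharing an endpoint, which your max-$b$-then-min-$a$ rule implements). Your explicit check that the descent never gets stuck, via gradedness of Bruhat intervals, fills in a detail the paper leaves implicit.
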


\begin{proof}
    We construct a greedy chain $u = u_{\ell} \lessdot u_{\ell-1} \lessdot\cdots\lessdot u_{0} = w$ inductively, from the top down. Given a greedy chain $ u_{i-1} \lessdot \cdots\lessdot u_{0} = w$ for $i\in [\ell]$, set $u_{i}\coloneq u_{i-1}t_{ab}$ such that $(a, b)$ cannot be replaced by some longer $(a', b)$ or $(a, b')$. 
\end{proof}

\begin{definition}\label{def:globalweight}
    For a permutation $w\in S_n$, the \emph{global weight} $\GW(w)$ of $w$ is the polynomial
    \[\GW(w) = \prod_{(a,b) \in \Inv(w)} (x_a+x_{a+1}+\dots +x_{b-1}).\]
\end{definition}

\begin{lemma}\label{lemma:greedy=GW}
    Given $w\in S_n$, the weight of every greedy chain in $[\id,w]$ is $\GW(w)$.
\end{lemma}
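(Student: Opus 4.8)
The plan is to induct on $\ell(w)$, building greedy chains from the top down (as in the proof of Lemma~\ref{lem:existsgreedychain}) and tracking how $\Inv$ changes at each covering step. Fix a greedy chain $\id = w_\ell \lessdot w_{\ell-1} \lessdot \cdots \lessdot w_0 = w$ in $[\id, w]$, and write $w_i = w_{i-1} t_{ab}$ with $a < b$ for the topmost step. Since $w_{i-1} \lessdot w_i$, the pair $(a,b)$ is an inversion of $w_i$ but not of $w_{i-1}$; more precisely, $w_{i-1}$ is obtained from $w_i$ by swapping the values in positions $a$ and $b$, which decreases length by one. The key structural fact I would isolate is: \emph{for a greedy step, $\Inv(w_{i-1}) = \Inv(w_i) \setminus \{(a,b)\}$, and moreover the weight contributed by this step, $x_a + x_{a+1} + \cdots + x_{b-1}$, is exactly the factor of $\GW(w_i)$ indexed by the removed inversion $(a,b)$}. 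Granting this, $m_C = \prod_{i=1}^{\ell} (x_{a_i} + \cdots + x_{b_i - 1})$ telescopes over the multiset of removed inversions, which is all of $\Inv(w)$, giving $m_C = \GW(w)$.

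The heart of the argument is the structural fact above, and this is where greediness is essential. In general, a covering step $w_{i-1} \lessdot w_i = w_{i-1}t_{ab}$ only guarantees that no value strictly between positions $a$ and $b$ lies strictly between $w_i(b)$ and $w_i(a)$ in value; it does \emph{not} by itself force $\Inv(w_{i-1})$ and $\Inv(w_i)$ to differ in exactly one pair — indeed, swapping positions $a,b$ can flip the relative order of position $a$ or $b$ with intermediate positions as well, changing several inversions at once. What greediness buys us is the maximality of the window $[a,b]$: conditions (i) and (ii) say we cannot enlarge the transposition while still landing at $w_i$ from inside $[\id, w]$. Since we are at the top of the interval, $w_i = w$ and $[\id, w]$ imposes no upper obstruction, so greediness forces $(a,b)$ to be a \emph{maximal} inversion-window in the sense that $w(a') < w(b)$ for all $a' < a$ (else we could use $t_{a'b}$) and $w(b') > w(a)$ for all $b' > b$ ... wait, more carefully: the relevant statement is that $w(a)$ is larger than every $w(c)$ with $a < c \le b$ and $w(b)$ is smaller than every $w(c)$ with $a \le c < b$. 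I would prove this directly: if some $w(c) > w(a)$ with $a<c\le b$, pick the leftmost such $c$; one checks $w_{i-1} := w t_{cb}$ still covers up to $w$ (via $t_{cb}$... ) — actually the cleanest route is to show the negation of greediness condition (i) or (ii) directly from the existence of an "interfering" intermediate value, using the standard characterization of covers ($ut_{ab} \gtrdot u$ iff $a<b$, $u(a)<u(b)$, and no $u(c)$ with $a<c<b$ satisfies $u(a)<u(c)<u(b)$). From maximality, a short case analysis shows: no intermediate position $c$ has its $\{c,a\}$-order or $\{c,b\}$-order flipped by the swap, hence $\Inv(w_{i-1})=\Inv(w_i)\setminus\{(a,b)\}$.

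Once the structural fact is established, the induction closes immediately: the truncated chain $w_{i-1}\lessdot\cdots\lessdot w_0$ is a greedy chain in $[\id, w_{i-1}]$ (greediness of a suffix is inherited, since the conditions are local and $[\id,w_{i-1}]\subseteq[\id,w]$ — here I would double-check that greediness in the smaller interval is not \emph{weaker}; it is in fact the same condition because at each remaining step $w_j=w_{j-1}t_{a_jb_j}$ the obstruction sets relative to $[\id,w_{i-1}]$ are no larger), so by induction its weight is $\GW(w_{i-1})=\GW(w)/(x_a+\cdots+x_{b-1})$, and multiplying back the top factor gives $m_C = \GW(w)$. The main obstacle, to reiterate, is the case analysis in the structural fact — specifically, cleanly extracting from the two greediness conditions the statement that the top transposition of a greedy chain removes exactly one inversion and that this inversion's window matches the edge weight; everything after that is bookkeeping via Definition~\ref{def:globalweight} and the telescoping product.
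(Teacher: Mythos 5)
Your overall skeleton matches the paper's proof: induct on $\ell(w)$, peel off the top step $w\gtrdot wt_{ab}$ of the greedy chain, reduce to the structural fact $\Inv(wt_{ab})=\Inv(w)\setminus\{(a,b)\}$, and note that the truncated chain is greedy in $[\id,wt_{ab}]$ (correct, since the set of potential violators only shrinks). The gap is in your justification of the structural fact, which is where all the content of the lemma lies. First, the maximality statement you settle on --- that $w(a)$ exceeds every $w(c)$ with $a<c\le b$ and $w(b)$ is below every $w(c)$ with $a\le c<b$ --- is false for greedy steps: take the greedy chain $1234\lessdot 1243\lessdot 1342$ in $[\id,1342]$, whose top step is $t_{24}$ with $w(2)=3$, $w(4)=2$, yet $w(3)=4>w(2)$. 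Second, and more importantly, your case analysis is aimed at the wrong pairs. For intermediate positions $a<c<b$, the cover criterion alone already gives $w(c)\notin(w(b),w(a))$, so the pairs $(a,c)$ and $(c,b)$ never change status; no greediness is needed there. The pairs that can actually be destroyed by the swap are $(a,j)$ with $j>b$ and $(j,b)$ with $j<a$: such a pair is an inversion of $w$ but not of $wt_{ab}$ exactly when $w(j)$ lies strictly between $w(b)$ and $w(a)$, and nothing in your sketch rules this out.

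This is precisely where greediness enters in the paper's proof. If some $j>b$ had $w(j)\in(w(b),w(a))$, take the smallest such $r>b$; the cover criterion (using that positions in $(a,b)$ avoid the value window, and minimality of $r$ for positions in $(b,r)$) shows $wt_{ar}\lessdot w$, so $w$ is reached from a coatom via $t_{ar}$ with $r>b$, contradicting condition (i) of greediness; symmetrically, a position $j<a$ with $w(j)\in(w(b),w(a))$ contradicts condition (ii) via the largest such position below $a$. Equivalently, greediness of the top step forces $w(a)=w(b)+1$, and that is what makes exactly one inversion disappear. Without this argument the telescoping identity $\GW(w)=\GW(wt_{ab})\cdot(x_a+\cdots+x_{b-1})$, and hence your induction, does not go through.
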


\begin{proof}
   We induct on $\ell(w)$, with the base case of $\ell(w) = 1$ clear. Suppose we have proven the statement for all $w'\in S_n$ with $\ell(w') < \ell$, and let $\ell(w) = \ell$. Let 
    \[C=(\id=w_0\lessdot w_1\lessdot w_2\lessdot \dots \lessdot w_{\ell} =w)\] 
    be a greedy chain in $[\id,w]$, and suppose $w=w_{\ell-1}t_{ab}$ for $a < b$. Since
    \[C'=(\id=w_0\lessdot w_1\lessdot w_2\lessdot \dots \lessdot w_{\ell-1})\] 
    is a greedy chain in $[\id, w_{\ell-1}]$, it suffices to show by the inductive hypothesis that
    \[\GW(w)=\GW(w_{\ell-1})\cdot (x_{a}+x_{a+1}+\dots+x_{b-1}).\]
    
    We compare $\Inv(w)$ and $\Inv(w_{\ell-1})$. After swapping $w(a)$ and $w(b)$ in $w$, the pair $(a, b)$ is no longer in $\Inv(w_{\ell-1})$. It suffices to show that $\Inv(w_{\ell-1})=\Inv(w)\setminus \{(a,b)\}$. Note that every pair $(c,d)\in \Inv(w)$ satisfying $c\neq a, b$ and $d\neq a, b$ is in $\Inv(w_{\ell-1})$. The remaining pairs in $\Inv(w)$ are of one of the following forms:
    \begin{enumerate}[label=\textnormal{(\roman*)}]
        \item $(a,j)$ for $a<j<b$
        \item $(a,j)$ for $a<b<j$
        \item $(j,b)$ for $a<j<b$
        \item $(j,b)$ for $j<a<b$.
    \end{enumerate}

    Since $\ell(wt_{ab})=\ell(w)-1$, we have $w(j)\notin [w(b),w(a)]$ for every $j\in [a+1,b-1]$. Thus, all pairs under cases (i) and (iii) are in $\Inv(w_{\ell-1})$.

    For case (ii), suppose $(a,j)\in \Inv(w)\setminus \Inv(w_{\ell-1})$, so $w(j)\in [w(b),w(a)]$. Let $r > b$ be the smallest integer such that $w(r)\in [w(b), w(a)]$; note that such an $r$ exists because $j > b$ has this property. Then $\ell(wt_{ar})=\ell(w)-1$, which contradicts the fact that $C$ is a greedy chain. Case (iv) is similar to case (ii).

    In conclusion, we have $\Inv(w_{\ell-1})=\Inv(w)\setminus \{(a,b)\}$, which implies
    \[\GW(w)=\GW(w_{\ell-1})\cdot (x_{a}+x_{a+1}+\dots+x_{b-1}),\] as desired.
\end{proof}

\begin{definition}\label{def:dominant pairing}
    We define a partial order $\preceq$ on $\{(a,b)\in \N^2\mid a<b\}$ such that $(a,b)\preceq (c,d)$ if and only if $[a,b]\subseteq [c,d]$. 

    Given an integer $\ell$, we define a partial order $\preceq_{\ell}$ on multisets of $\{(a,b)\in \N^2\mid a<b\}$ with $\ell$ elements as follows: for two multisets $G$ and $H$, we say $G \preceq_{\ell} H$ if and only if there exists a pairing of elements in $G$ and $H$ such that for each pair $((a_i,b_i),(c_i,d_i))\in G\times H$ in this pairing, we have $(a_i,b_i)\preceq (c_i,d_i)$. Such a pairing is called a \emph{dominant pairing}.
\end{definition}

\begin{definition}
    For a saturated chain $C = (u_0 \lessdot u_1\lessdot\cdots \lessdot u_{\ell})$ in the Bruhat interval $[u_0,u_{\ell}]$ in $S_n$,
    we define its \emph{generating set} $G_C$ to be the multiset containing the pairs of the positions $(a_i, b_i)$ swapped along edges in $C$:
    \[G_C = \{(a_i,b_i)\in [n]\mid u_{i}=u_{i-1}t_{a_ib_i}, a_i < b_i, i\in[\ell]\}.\]
\end{definition}
\begin{example}\label{ex:Gc}
    For the saturated chain $123 \lessdot 132 \lessdot 231 \lessdot 321$, the generating set is $\{(2,3), (1,3), (1,2)\}$.
\end{example}

\begin{lemma}\label{lemma:chainorder}
    Given $w\in S_n$ with $\ell(w) = \ell$, for every saturated chain \[C=(\id=w_0\lessdot w_1\lessdot w_2\lessdot \dots \lessdot w_{\ell} =w)\] in $[\id, w]$, we have
    $G_C\preceq_{\ell} \Inv(w)$.
\end{lemma}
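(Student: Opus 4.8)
The plan is to induct on $\ell = \ell(w)$, removing the top edge of the chain $C$ and applying the inductive hypothesis to the truncated chain. The base case $\ell = 0$ is immediate: the only chain is the trivial one, and $G_C = \emptyset = \Inv(\id)$.

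The substance of the argument is a one-step lemma describing how the inversion set changes along a single cover $v \lessdot w = v t_{ab}$ with $a < b$. From $v \lessdot v t_{ab}$ we get $v(a) < v(b)$ and $v(k) \notin (v(a), v(b))$ for all $a < k < b$. I would set $L = \{k < a : v(a) < v(k) < v(b)\}$ and $R = \{k > b : v(a) < v(k) < v(b)\}$, and verify by a finite case analysis on the position of a pair relative to $\{a,b\}$ that
\[
\Inv(w) = \Big(\Inv(v) \setminus \big(\{(k,a) : k \in L\} \cup \{(b,k) : k \in R\}\big)\Big) \cup \{(a,b)\} \cup \{(k,b) : k \in L\} \cup \{(a,k) : k \in R\},
\]
with all four parts pairwise disjoint. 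It follows that the map $\phi$ sending $(k,a)\mapsto (k,b)$ for $k\in L$, sending $(b,k)\mapsto (a,k)$ for $k\in R$, sending $(a,b)\mapsto(a,b)$, and fixing every other element of $\Inv(v)$, is a bijection $\phi : \Inv(v) \sqcup \{(a,b)\} \to \Inv(w)$ (the disjoint union makes sense since $v(a)<v(b)$ forces $(a,b)\notin\Inv(v)$, and it has $\ell(v)+1 = \ell(w)$ elements). Crucially $x \preceq \phi(x)$ for every $x$, because $[k,a]\subseteq[k,b]$, $[b,k]\subseteq[a,k]$, and $\phi$ is the identity elsewhere.

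Granting this, the inductive step is routine. Write $C = (\id = w_0 \lessdot \cdots \lessdot w_\ell = w)$, let $(a,b) = (a_\ell, b_\ell)$ so $w = w_{\ell-1}t_{ab}$ with $\ell(w_{\ell-1}) = \ell-1$, and set $C' = (w_0 \lessdot \cdots \lessdot w_{\ell-1})$. The inductive hypothesis supplies a dominant pairing $\pi : G_{C'} \to \Inv(w_{\ell-1})$, a bijection of multisets with $x \preceq \pi(x)$. Applying the one-step lemma with $v = w_{\ell-1}$ gives $\phi : \Inv(w_{\ell-1}) \sqcup \{(a,b)\} \to \Inv(w)$, and since $\phi(a,b)=(a,b)$ its restriction to $\Inv(w_{\ell-1})$ is a bijection onto $\Inv(w)\setminus\{(a,b)\}$. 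Define $\Pi : G_C = G_{C'} \sqcup \{(a,b)\} \to \Inv(w)$ by $\Pi(x) = \phi(\pi(x))$ for $x\in G_{C'}$ and $\Pi(a,b)=(a,b)$. Then $\Pi$ is a bijection of multisets, and by transitivity of $\preceq$ we have $x \preceq \pi(x) \preceq \phi(\pi(x)) = \Pi(x)$ for $x\in G_{C'}$, while $(a,b)\preceq\Pi(a,b)$; hence $\Pi$ is a dominant pairing and $G_C \preceq_\ell \Inv(w)$.

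The main obstacle is the one-step lemma: correctly identifying the sets $L$ and $R$ and confirming that, along a cover, the \emph{only} changes to the inversion set are the $\preceq$-increasing relocations $(k,a)\leftrightarrow(k,b)$ for $k\in L$ and $(b,k)\leftrightarrow(a,k)$ for $k\in R$, together with the single new inversion $(a,b)$; everything else is fixed. The cover condition $v(k)\notin(v(a),v(b))$ for $a<k<b$ is exactly what rules out changes among pairs nested inside $[a,b]$, so once the casework is organized by the location of a pair relative to $a$ and $b$, the verification is elementary.
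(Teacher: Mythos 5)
Your proposal is correct and follows essentially the same route as the paper: induct on $\ell$, strip the top cover $w_{\ell-1}\lessdot w_{\ell-1}t_{ab}=w$, and build a dominant pairing between $\Inv(w_{\ell-1})\cup\{(a,b)\}$ and $\Inv(w)$; your explicit bijection $\phi$ (relocating $(k,a)\mapsto(k,b)$ for $k\in L$ and $(b,k)\mapsto(a,k)$ for $k\in R$, fixing everything else) is exactly the pairing the paper constructs in its cases (ii) and (iv), and your one-step description of $\Inv(w)$ is correct. The only cosmetic difference is that you state the inversion-set change as a standalone lemma and compose bijections explicitly, where the paper chains $\preceq_\ell$ relations.
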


\begin{proof}
    We induct on $\ell$, with the base case of $\ell(w) = 1$ clear. Suppose we have proven the statement for all $w'\in S_n$ with $\ell(w') < \ell$, and let $\ell(w) = \ell$. Let $a < b$ satisfy $w=w_{\ell-1}t_{ab}$, and let 
    \[C'=(\id=w_0\lessdot w_1\lessdot w_2\lessdot \dots \lessdot w_{\ell-1}).\] 
    Since $G_C=G_{C'}\cup\{(a,b)\}$, and we know $G_{C'}\preceq_{\ell-1} \Inv(w_{\ell-1})$ from the inductive hypothesis, we have
    \[G_C=G_{C'}\cup\{(a,b)\}\preceq_{\ell}\Inv(w_{\ell-1})\cup\{(a,b)\}.\]
    It now suffices to show that $\Inv(w_{\ell-1})\cup\{(a,b)\}\preceq_{\ell} \Inv(w)$, which we do by constructing a dominant pairing $\P$.
     
    The multisets $\Inv(w_{\ell-1})\cup\{(a,b)\}$ and $\Inv(w)$ are in fact both sets, since $(a,b)\notin \Inv(w_{\ell-1})$.
    Note that every pair $(c, d)\in \Inv(w)$ satisfying $c\neq a, b$ and $d\neq a, b$ is in $\Inv(w_{\ell-1})$, and $(a,b)$ is in $\Inv(w_{\ell-1})\cup\{(a,b)\}$. The remaining pairs in $\Inv(w)$ are of one of the following forms:
    \begin{enumerate}[label=\textnormal{(\roman*)}]
        \item $(a,j)$ for $a<j<b$
        \item $(a,j)$ for $a<b<j$
        \item $(j,b)$ for $a<j<b$
        \item $(j,b)$ for $j<a<b$.
    \end{enumerate}

    Since $\ell(wt_{ab})=\ell(w)-1$, we have $w(j)\notin [w(b),w(a)]$ for every $j\in [a+1,b-1]$. Then the pairs from cases (i) and (iii) are also in $\Inv(w_{\ell-1})$.
    We put these pairs appearing in both $\Inv(w_{\ell-1})\cup\{(a,b)\}$ and $\Inv(w)$ together in $\P$. 

    Now suppose there is a pair $(a, j)\in\Inv(w)$ falling under case (ii). If $(b,j)$ is also in $\Inv(w)$, then $w(j) < w(b) < w(a)$. Hence $(a,j)$ is in $\Inv(w_{\ell-1})$, and we let $((a,j),(a,j))\in \P$. Otherwise if $(b,j)\notin \Inv(w)$, then $w(b) < w(j) < w(a)$. So $(b, j)\in \Inv(w_{\ell-1})\setminus \Inv(w)$, and we can let $((b,j),(a,j))\in \P$ since $[b,j]\subseteq[a,j]$. Case (iv) is similar to case (ii).
\end{proof}

\begin{lemma}\label{lemma:chainNPcontainment}
    Given $w\in S_n$, for every saturated chain $C$ in $[\id, w]$, we have
    \[\supp(m_C)\subseteq \supp(\GW(w)).\]
\end{lemma}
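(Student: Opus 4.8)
The plan is to reduce everything to Minkowski sums and then invoke the dominant pairing from \Cref{lemma:chainorder}. First I would record that the weight of a single cover has a very simple support: $\supp(x_a + x_{a+1} + \dots + x_{b-1}) = \{e_a, e_{a+1}, \dots, e_{b-1}\}$. Writing the chain weight as $m_C = \prod_{(a,b) \in G_C}(x_a + x_{a+1} + \dots + x_{b-1})$ (this is exactly the definition of $m_C$ read through the generating set $G_C$), \Cref{prop:suppsumprod} gives
\[
\supp(m_C) = \sum_{(a,b)\in G_C} \{e_a, e_{a+1}, \dots, e_{b-1}\},
\]
and likewise, from \Cref{def:globalweight},
\[
\supp(\GW(w)) = \sum_{(a,b)\in \Inv(w)} \{e_a, e_{a+1}, \dots, e_{b-1}\},
\]
both being Minkowski sums of sets of elementary basis vectors.

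Next I would use \Cref{lemma:chainorder}, which asserts $G_C \preceq_\ell \Inv(w)$, i.e.\ there is a dominant pairing of the multisets $G_C$ and $\Inv(w)$: a bijection matching each $(a,b) \in G_C$ with some $(c,d) \in \Inv(w)$ such that $[a,b] \subseteq [c,d]$. The key elementary observation is that $[a,b] \subseteq [c,d]$ forces $[a, b-1] \subseteq [c, d-1]$, hence
\[
\{e_a, e_{a+1}, \dots, e_{b-1}\} \subseteq \{e_c, e_{c+1}, \dots, e_{d-1}\}.
\]
So, term by term along the dominant pairing, each summand in the Minkowski sum for $\supp(m_C)$ is contained in the corresponding summand in the Minkowski sum for $\supp(\GW(w))$.

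Finally I would invoke the monotonicity of Minkowski sums: if $A_i \subseteq B_i$ for $i = 1, \dots, \ell$, then $A_1 + \dots + A_\ell \subseteq B_1 + \dots + B_\ell$, since any element of the left-hand side is a sum $\sum_i \alpha_i$ with $\alpha_i \in A_i \subseteq B_i$. Applying this with the $A_i$ ranging over the support sets coming from $G_C$ and the $B_i$ over the matched support sets coming from $\Inv(w)$ yields $\supp(m_C) \subseteq \supp(\GW(w))$, as desired.

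I do not expect a genuine obstacle here: the substantive combinatorics is entirely encapsulated in \Cref{lemma:chainorder}. The only points requiring a little care are the translation of the interval-containment relation $\preceq$ into containment of sets of basis vectors (note the shift from $b$ to $b-1$) and the explicit statement that Minkowski summation is monotone under componentwise inclusion; both are routine and can be dispatched in a sentence each.
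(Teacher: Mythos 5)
Your proposal is correct and follows essentially the same route as the paper: both hinge on the dominant pairing from \Cref{lemma:chainorder} together with the observation that interval containment $[a,b]\subseteq[c,d]$ lets each linear factor of $m_C$ be matched with a dominating linear factor of $\GW(w)$. Your explicit Minkowski-sum/monotonicity phrasing is just a more formal rendering of the paper's ``each monomial of $m_C$ is also in $\GW(w)$'' step, so there is nothing to add.
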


\begin{proof}
    Let $\ell = \ell(w)$. By \cref{lemma:chainorder}, we have $G_C\preceq_{\ell} \Inv(w)$, so there exists a dominant pairing $\P$ of $G_C$ and $\Inv(w)$ such that $[a,b]\subseteq [c,d]$ for each pair $((a,b),(c,d))\in \P$. Observe that each pair $(a,b)\in G_C$ corresponds to a linear factor $x_a+x_{a+1}+\dots+x_{b-1}$
    of $m_C$, and likewise each pair $(c,d)\in \Inv(w)$ corresponds to a linear factor $x_c+x_{c+1}+\dots+x_{d-1}$
    of $\GW(w)$. As a result, each monomial of $m_C$ is also in $\GW(w)$, so the lemma is shown.
\end{proof}

\begin{theorem}\label{theorem:DualSchubSCNP}
    Every greedy chain of $[\id, w]$ is a dominant chain of $D^w$, and 
    \[\supp (D^w)= \supp(\GW(w)).\]
    As a result, for all $w\in S_n$, the dual Schubert polynomial $D^w$ has SCNP.
\end{theorem}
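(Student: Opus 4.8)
The plan is to assemble the preceding lemmas into a two-sided support containment. First I would observe that, since $D^w = \frac{1}{\ell(w)!}\sum_C m_C$ is a nonnegative scalar multiple of a sum of chain weights $m_C$, each of which lies in $\R_{\ge 0}[x_1,\dots,x_{n-1}]$, no cancellation occurs; applying the additive part of \Cref{prop:suppsumprod} inductively over all saturated chains $C$ from $\id$ to $w$ (and noting that rescaling by $1/\ell(w)!$ does not affect supports) gives
\[
\supp(D^w) = \bigcup_{C} \supp(m_C),
\]
the union being over all such chains $C$.

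For the inclusion $\supp(D^w)\subseteq\supp(\GW(w))$, I would invoke \Cref{lemma:chainNPcontainment}, which says $\supp(m_C)\subseteq\supp(\GW(w))$ for \emph{every} saturated chain $C$ in $[\id,w]$; taking the union over all $C$ yields the claim. For the reverse inclusion, I would use that greedy chains exist (\Cref{lem:existsgreedychain}): fix a greedy chain $C_0$ of $[\id,w]$. By \Cref{lemma:greedy=GW} its weight is $m_{C_0}=\GW(w)$, so $\supp(m_{C_0})=\supp(\GW(w))$. Since $C_0$ appears among the chains in the defining sum and the coefficients are nonnegative, $\supp(m_{C_0})\subseteq\supp(D^w)$. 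Combining the two inclusions gives $\supp(D^w)=\supp(\GW(w))$.

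Finally, the dominant-chain statement is immediate from what we have proven: for \emph{any} greedy chain $C$ of $[\id,w]$, \Cref{lemma:greedy=GW} gives $m_C=\GW(w)$, hence $\supp(m_C)=\supp(\GW(w))=\supp(D^w)$, so $C$ is a dominant chain of $D^w$ in the sense of \Cref{SCNP}. Since at least one greedy chain exists, $D^w$ has SCNP.

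I do not anticipate a genuine obstacle here, as the substantive content is already isolated in the lemmas; the only point requiring a little care is the bookkeeping in the first paragraph, namely that the support of the normalized sum $\frac{1}{\ell(w)!}\sum_C m_C$ really is the union of the $\supp(m_C)$. This relies precisely on all chain weights having nonnegative coefficients, so that \Cref{prop:suppsumprod} applies and no monomials are annihilated in the sum.
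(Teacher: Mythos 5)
Your proposal is correct and follows the same route as the paper: the inclusion $\subseteq$ via \Cref{lemma:chainNPcontainment} together with \Cref{prop:suppsumprod}, and the inclusion $\supseteq$ via \Cref{lem:existsgreedychain} and \Cref{lemma:greedy=GW}. Your extra bookkeeping about nonnegativity and the union of chain supports is exactly what the paper leaves implicit.
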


\begin{proof}
The inclusion $\subseteq$ follows from \cref{lemma:chainNPcontainment} and \cref{prop:suppsumprod}. The reverse inclusion $\supseteq$ follows from \cref{lemma:greedy=GW} and \cref{lem:existsgreedychain}.
\end{proof}

\begin{corollary}\label{cor:DualSchubSNP}
    For all $w\in S_n$, the dual Schubert polynomial $D^w$ has SNP.
\end{corollary}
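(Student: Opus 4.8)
The plan is simply to assemble results already established. By \Cref{theorem:DualSchubSCNP}, for every $w \in S_n$ the dual Schubert polynomial $D^w$ has SCNP: \Cref{lem:existsgreedychain} guarantees that a greedy chain of $[\id, w]$ exists, and \Cref{theorem:DualSchubSCNP} identifies any such chain as a dominant chain of $D^w$. Since \Cref{prop:SCNPtoSNP} shows that SCNP implies SNP for Postnikov--Stanley polynomials, specializing to $u = \id$ gives that $D^w$ has SNP. That is the whole argument.

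For completeness I would also record the parallel route that goes directly through the global weight, avoiding the SCNP terminology. \Cref{theorem:DualSchubSCNP} (equivalently \Cref{theorem:support}) yields $\supp(D^w) = \supp(\GW(w))$, and therefore $\Newton(D^w) = \conv(\supp(D^w)) = \conv(\supp(\GW(w))) = \Newton(\GW(w))$. The global weight $\GW(w) = \prod_{(a,b)\in\Inv(w)}(x_a + \cdots + x_{b-1})$ is a product of nonnegative linear combinations of the variables, so \Cref{prop:linearprodSNP} says $\GW(w)$ has SNP, i.e.\ its support is exactly the lattice points of its Newton polytope. Since $D^w$ shares both its support and its Newton polytope with $\GW(w)$, the same conclusion holds for $D^w$.

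I do not expect any obstacle here: every piece of combinatorial content — the existence of greedy chains, the evaluation of their weight as $\GW(w)$, and the dominant-pairing bookkeeping that controls the weight of an arbitrary chain — has already been handled in \Cref{lem:existsgreedychain,lemma:greedy=GW,lemma:chainorder,lemma:chainNPcontainment}, so the statement follows in one line. The only minor points to verify are that $D^w$ and $\GW(w)$ are honest polynomials of the type to which \Cref{prop:SCNPtoSNP} and \Cref{prop:linearprodSNP} apply (both are homogeneous of degree $\ell(w) = |\Inv(w)|$), and these are immediate from the definitions.
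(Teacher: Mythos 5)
Your first paragraph is exactly the paper's proof: \Cref{theorem:DualSchubSCNP} gives SCNP and \Cref{prop:SCNPtoSNP} upgrades it to SNP. The alternative route via $\GW(w)$ is just the same argument unwound (since \Cref{prop:SCNPtoSNP} itself rests on \Cref{prop:linearprodSNP} applied to chain weights), so the proposal is correct and takes essentially the same approach.
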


\begin{proof}
    This follows from \cref{prop:SCNPtoSNP} and \cref{theorem:DualSchubSCNP}. 
\end{proof}

Moreover, \Cref{theorem:DualSchubSCNP} gives a more precise characterization of $\supp(D^w)$, as stated in \Cref{theorem:support}.

\begin{proof}[Proof of \Cref{theorem:support}]
    This follows from \Cref{theorem:DualSchubSCNP}, \Cref{def:globalweight}, and \Cref{prop:suppsumprod}.
\end{proof}

\subsection{Newton polytopes as generalized permutahedra}\label{section:newtongeneralizedpermutahedra}

\begin{definition}
    For $1\le a < b\le n$, let $M_{ab} = ([n], \B)$ be the matroid on $[n]$ with bases $\B = \{\{a\}, \{a + 1\}, \dots, \{b-1\}\}$.
\end{definition}
The motivation for defining such a matroid is the following observation about its matroid polytope:
letting $e_i\in\R^n$ denote the unit vector with a 1 in the $i$th coordinate, we have 
\begin{align}\label{eq:motivation}
    P(M_{ab}) = \conv\{e_a, e_{a+1}, \dots, e_{b-1}\} = \Newton(x_a + x_{a+1} + \cdots + x_{b-1}).
\end{align}

Now we may apply theorems about matroid polytopes from \cref{section:matroidpolytopes} to obtain a characterization of $\Newton(D^w)$ as a generalized permutahedron.
In the following, let $I\supseteq [a, b)$ denote $I\supseteq \{a, a+1, \dots, b-1\}$, for $I\subseteq [n]$. 

\begin{theorem}\label{thm:DualSchubertAsGeneralizedPermutahedron}
    For $w\in S_n$, $\Newton(D^w)$ is a generalized permutahedron $P_n^z(\{z_I\})_{I\subseteq [n]}$ with
    \[z_I = \sum_{(a, b)\in \Inv(w)}\1_{I\supseteq[a, b)}\]
    for all $I\subseteq [n]$.
\end{theorem}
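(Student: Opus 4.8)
The plan is to reduce the statement to the matroid-polytope machinery assembled in \cref{section:matroidpolytopes}. First I would note that a polynomial's Newton polytope depends only on its support, so by \cref{theorem:DualSchubSCNP} we have $\Newton(D^w) = \Newton(\GW(w))$. Since $\GW(w) = \prod_{(a,b)\in\Inv(w)}(x_a+x_{a+1}+\dots+x_{b-1})$ is a product of polynomials with nonnegative coefficients, \cref{prop:Newtonsumprod} applies and gives
\[
\Newton(D^w) = \sum_{(a,b)\in\Inv(w)} \Newton(x_a+x_{a+1}+\dots+x_{b-1}) = \sum_{(a,b)\in\Inv(w)} P(M_{ab}),
\]
where the last equality is \eqref{eq:motivation}.

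Next I would unpack the generalized-permutahedron description of each summand $P(M_{ab})$ using \cref{prop:matroidsarepermutahedra}. The rank function of $M_{ab}$ is immediate from its bases: $r_{M_{ab}}(S) = 1$ if $S\cap\{a,a+1,\dots,b-1\}\neq\varnothing$ and $r_{M_{ab}}(S)=0$ otherwise; in particular $r_{M_{ab}}([n])=1$. Feeding this into the identity $z_I = r_{M_{ab}}([n]) - r_{M_{ab}}([n]\setminus I)$ from \cref{prop:matroidsarepermutahedra}, one checks that $r_{M_{ab}}([n]\setminus I)=0$ precisely when $([n]\setminus I)\cap[a,b)=\varnothing$, i.e.\ when $I\supseteq[a,b)$. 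Hence $P(M_{ab}) = P_n^z(\{\1_{I\supseteq[a,b)}\}_{I\subseteq[n]})$.

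Finally, iterating \cref{prop:permutahedraminkowskisum} — generalized permutahedra are closed under Minkowski sums, with the defining parameters adding — I conclude that $\Newton(D^w) = \sum_{(a,b)\in\Inv(w)} P(M_{ab})$ is the generalized permutahedron $P_n^z(\{z_I\})$ with $z_I = \sum_{(a,b)\in\Inv(w)} \1_{I\supseteq[a,b)}$, which is exactly the claim. I do not expect a substantive obstacle here; the only points requiring care are the half-open interval convention $[a,b) = \{a,a+1,\dots,b-1\}$ when evaluating $r_{M_{ab}}$, and the bookkeeping that reconciles the \emph{upper}-bound form of a matroid polytope ($\sum_{i\in I} t_i \le r_M(I)$) with the \emph{lower}-bound form of $P_n^z$ — but this is precisely the content of the identity in \cref{prop:matroidsarepermutahedra}. (As a consistency check, $\GW(w)$ is homogeneous of degree $\ell(w)$, matching $z_{[n]} = |\Inv(w)| = \ell(w)$, and the case $w=\id$ gives the empty sum $\Newton(D^{\id}) = \{0\}$.)
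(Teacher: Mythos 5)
Your proposal is correct and follows essentially the same route as the paper: identify $\Newton(D^w)$ with $\Newton(\GW(w))$ via \cref{theorem:DualSchubSCNP}, convert the product into a Minkowski sum of the polytopes $P(M_{ab})$ using \cref{prop:Newtonsumprod} and \eqref{eq:motivation}, express each $P(M_{ab})$ as $P_n^z(\{\1_{I\supseteq[a,b)}\})$ via the rank-function computation and \cref{prop:matroidsarepermutahedra}, and sum the parameters with \cref{prop:permutahedraminkowskisum}. No gaps.
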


\begin{proof}

    By \cref{def:rankfunction}, we have for $I\subseteq [n]$ that
    \[r_{M_{ab}}([n]\setminus I) = \begin{cases}
        0 & \text{if } I\supseteq [a, b)\\
        1 & \text{if } I\nsupseteq [a, b)
    \end{cases}.\]
    Then from \cref{prop:matroidsarepermutahedra}, we have 
    \begin{align*}
        P(M_{ab}) &= P_n^z(\{r_{M_{ab}}([n]) - r_{M_{ab}}([n]\setminus I)\})_{I\subseteq [n]}\\
        &= P_n^z(\{1 - r_{M_{ab}}([n]\setminus I)\})_{I\subseteq [n]}\\
        &= P_n^z(\{\1_{I\supseteq [a, b)}\})_{I\subseteq [n]}.
    \end{align*}

    Recall from \cref{theorem:DualSchubSCNP} and \cref{def:globalweight} that $\Newton(D^w) = \Newton(\GW(w))$.
    We may then write 
    \begin{align*}
        \Newton(D^w) &= \sum_{(a, b)\in\Inv(w)} \Newton (x_a + x_{a+1} + \cdots + x_{b-1}) &\text{\cref{prop:Newtonsumprod}}\\
        &= \sum_{(a, b)\in\Inv(w)} P(M_{ab}) &\text{\cref{eq:motivation}} \\
        &= \sum_{(a, b)\in\Inv(w)} P_n^z(\{\1_{I\supseteq [a, b)}\})_{I\subseteq [n]} &\text{ \cref{def:rankfunction}, \cref{prop:matroidsarepermutahedra}} \\
        &= P_n^z\Big(\Big\{\sum_{(a, b)\in \Inv(w)}\1_{I\supseteq[a, b)}\Big\}\Big)_{I\subseteq [n]}. & \text{\cref{prop:permutahedraminkowskisum}}.
    \end{align*}
\end{proof}

Now we are ready to prove \Cref{thm:M-conv}. 
\begin{proof}[Proof of \Cref{thm:M-conv}]
    This follows from \Cref{cor:DualSchubSNP}, \Cref{thm:DualSchubertAsGeneralizedPermutahedron} and \Cref{prop:Mconvex=SNP+GP}.
\end{proof}

\section{Vertices of Newton polytopes}\label{section:vertices}

We characterize the vertices of $\Newton(D^w)$, as described in \Cref{thm:DSPNPVert}.

\begin{proof}[Proof of \Cref{thm:DSPNPVert}]
    This follows from \Cref{theorem:DualSchubSCNP} and \cite[Theorem 3.5]{2009minkowski}.
\end{proof}

Furthermore, we describe a combinatorial procedure to obtain the vertices of $\Newton(D^w)$ for a given $w \in S_n$. 

\begin{itemize}
    \item \textit{Step 1:} Construct a Young diagram of staircase shape $(n-1, n - 2, \ldots, 1)$, and label the boxes by the following pairs of inversions: in the $i$th row of the diagram for $1\le i \le n-1$, label the boxes from left to right by $(i,n), (i,n-1), \dots, (i,i+1)$.
    \item \textit{Step 2:} In each box, write a 1 if the inversion pair is in $\Inv(w)$, and a 0 otherwise.
    \item \textit{Step 3:} Construct the $\frac{1}{n+1}\binom{2n}{n}$ tilings of the Young diagram by $n-1$ rectangles. 
    \item \textit{Step 4:} For each tiling, sum the entries of each rectangle and write the sum at the bottom right corner of the rectangle. Reading the summands from top to bottom gives a vertex of $\Newton(D^w)$. 
\end{itemize}

\begin{corollary}
    The procedure above gives all the vertices of $\Newton(D^w)$.
\end{corollary}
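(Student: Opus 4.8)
The plan is to show that the combinatorial procedure of Steps 1--4 is exactly an unpacking of the characterization in \Cref{thm:DSPNPVert} via the rectangle-tiling description of the vertices of a Minkowski sum of simplices, so that everything reduces to \cite[Corollary~8.2]{postnikovpermutohedra}. Recall from \cite[Theorem~3.5]{2009minkowski} (as invoked in the proof of \Cref{thm:DSPNPVert}) that the vertices of $\Newton(\GW(w))$ are precisely the exponent vectors $\alpha$ that arise with coefficient $1$ in $\prod_{(a,b)\in\Inv(w)}(x_a+\dots+x_{b-1})$; equivalently, the vertices are the sums $\sum_{(a,b)\in\Inv(w)} e_{c(a,b)}$, where $c$ is a choice of one index $c(a,b)\in[a,b-1]$ for each inversion, such that no two \emph{distinct} choice functions produce the same sum. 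So the task is to match these "unique-expansion" choice functions with rectangle tilings of the staircase.

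First I would recall the bijection underlying \cite[Corollary~8.2]{postnikovpermutohedra}: Postnikov shows that for a Minkowski sum of simplices indexed by intervals forming a staircase-type hypersimplex arrangement, the vertices of the sum are enumerated by tilings of the corresponding staircase Young diagram by rectangles (the count $\frac{1}{n+1}\binom{2n}{n}$ in Step~3 is the number of such tilings, which matches Postnikov's count). Under this bijection, a rectangle $R$ in a tiling corresponds to a collection of inversions—precisely the labels $(i,j)$ sitting inside $R$—all of which are assigned the same index; reading off, for each row, which rectangle governs it yields the choice function, and the entry written at the bottom-right corner of $R$ in Step~4 is exactly $\sum_{(a,b)\in R}\1_{(a,b)\in\Inv(w)}$, i.e.\ the multiplicity with which the corresponding basis vector appears in the vertex. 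Step~2's $0/1$ labeling then simply records which inversions are actually present, so summing over a rectangle and reading top-to-bottom reproduces $\alpha=\sum_{(a,b)\in\Inv(w)} e_{c(a,b)}$.

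Concretely, the steps I would carry out are: (1) state precisely the tiling-to-vertex bijection of \cite[Corollary~8.2]{postnikovpermutohedra} for the staircase $(n-1,\dots,1)$ with the interval labeling of Step~1, noting that the box in row $i$ at position counted from the right is labeled with the interval $[i,j-1]$ corresponding to the transposition $t_{ij}$; (2) observe that \Cref{theorem:DualSchubSCNP} gives $\Newton(D^w)=\Newton(\GW(w))$ and that $\GW(w)$ is the product over $\Inv(w)$ of the linear forms $x_i+\dots+x_{j-1}$, so $\Newton(D^w)$ is the Minkowski sum of exactly the simplices $P(M_{ij})$ for $(i,j)\in\Inv(w)$—this is a ``face''/coordinate-projection situation of the full staircase sum, obtained by deleting the simplices for non-inversions; (3) check that deleting a simplex from the Minkowski sum corresponds, on the tiling side, to erasing the corresponding box's contribution, which is implemented by Step~2's $0$ label and the ``sum the entries'' instruction of Step~4; (4) conclude that the multiset of vectors produced by the procedure equals the vertex set of $\Newton(\GW(w))$, hence of $\Newton(D^w)$.

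The main obstacle will be step (3): being careful that when we restrict from the full staircase Minkowski sum (all $\binom{n}{2}$ simplices) to the sub-sum over $\Inv(w)$, the tilings of the \emph{full} staircase still index the vertices correctly—a priori different tilings could now collapse to the same vertex, and one must confirm this is exactly accounted for by the statement ``the procedure gives all the vertices'' (with multiplicity/repetition allowed) rather than a bijection. I would handle this by noting that \Cref{thm:DSPNPVert} already asserts the vertices are a \emph{set} cut out by the coefficient-$1$ condition, so it suffices to show every tiling yields a vertex and every vertex is yielded by at least one tiling; the former follows because zeroing out some summands of a vertex-expansion can only keep it a vertex (a face of a face, or: projecting/forgetting coordinates of the defining choice function), and the latter because any coefficient-$1$ monomial of $\GW(w)$ extends to a choice function on the full staircase, which \cite[Corollary~8.2]{postnikovpermutohedra} realizes by some tiling.
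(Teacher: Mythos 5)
Your proposal is correct and follows essentially the paper's own route: the paper proves this corollary by directly combining \Cref{theorem:support} (equivalently \Cref{theorem:DualSchubSCNP}, which gives $\Newton(D^w)=\sum_{(a,b)\in\Inv(w)}\conv\{e_a,\dots,e_{b-1}\}$) with \cite[Corollary~8.2]{postnikovpermutohedra}, which is exactly the tiling description you unpack, so your argument is an elaboration of the same citation rather than a different proof. One small tightening: your justification that every vertex is produced by some tiling should proceed by maximizing a generic linear functional over the full staircase sum and restricting the resulting tiling to the inversion boxes (not every choice function on the full staircase is realized by a tiling, so ``extends to a choice function'' is not quite enough).
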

    
\begin{proof}
    This follows from \Cref{theorem:support} and \cite[Corollary~8.2]{postnikovpermutohedra}.
\end{proof}

\begin{example}
    \Cref{fig:01061} illustrates the procedure for obtaining the vertex $(0,1,0,6,1)$ of $\Newton(D^{254361})$.
\begin{figure}[H]
\centering
\begin{minipage}{0.45\textwidth}
\centering
\begin{tikzpicture}[scale=1.2]

    \draw (0.35, -0.2) node {\scalebox{0.8}{$(1,6)$}};
    \draw (1.35, -0.2) node {\scalebox{0.8}{$(1,5)$}};
    \draw (2.35, -0.2) node {\scalebox{0.8}{$(1,4)$}};
     \draw (3.35, -0.2) node {\scalebox{0.8}{$(1,3)$}};
    \draw (4.35, -0.2) node {\scalebox{0.8}{$(1,2)$}};
    \draw (0.35, -1.2) node {\scalebox{0.8}{$(2,6)$}};
    \draw (1.35, -1.2) node {\scalebox{0.8}{$(2,5)$}};
    \draw (2.35, -1.2) node {\scalebox{0.8}{$(2,4)$}};
    \draw (3.35, -1.2) node {\scalebox{0.8}{$(2,3)$}};
    \draw (0.35, -2.2) node {\scalebox{0.8}{$(3,6)$}};
    \draw (1.35, -2.2) node {\scalebox{0.8}{$(3,5)$}};
    \draw (2.35, -2.2) node {\scalebox{0.8}{$(3,4)$}};
    \draw (0.35, -3.2) node {\scalebox{0.8}{$(4,6)$}};
    \draw (1.35, -3.2) node {\scalebox{0.8}{$(4,5)$}};
    \draw (0.35, -4.2) node {\scalebox{0.8}{$(5,6)$}};

    \draw[thick] (0, -1) -- (4, -1);
    \draw[thick] (0, -2) -- (3, -2);
    \draw[thick] (0, -3) -- (2, -3);
    \draw[thick] (0, -4) -- (1, -4);
    
    \draw[thick] (1, 0) -- (1, -4);
    \draw[thick] (2, 0) -- (2, -3);
    \draw[thick] (3, 0) -- (3, -2);
    \draw[thick] (4, 0) -- (4, -1);

    \draw[thick] (0, 0) -- (5, 0) -- (5, -1) -- (4, -1) -- (4, -2) -- (3, -2) -- (3, -3) -- (2, -3) -- (2,-4) -- (1,-4) -- (1, -5) -- (0, -5) -- (0,0);
\end{tikzpicture}
\caption*{\textit{Step 1:} A staircase Young diagram with $n= 6$.}
\label{fig:step1}
    
\end{minipage}
\hfill
\begin{minipage}{0.45\textwidth}
\centering
    \begin{tikzpicture}[scale=1.2]

    \draw (0.35, -0.2) node {\scalebox{0.8}{$(1,6)$}};
    \draw (1.35, -0.2) node {\scalebox{0.8}{$(1,5)$}};
    \draw (2.35, -0.2) node {\scalebox{0.8}{$(1,4)$}};
     \draw (3.35, -0.2) node {\scalebox{0.8}{$(1,3)$}};
    \draw (4.35, -0.2) node {\scalebox{0.8}{$(1,2)$}};
    \draw (0.35, -1.2) node {\scalebox{0.8}{$(2,6)$}};
    \draw (1.35, -1.2) node {\scalebox{0.8}{$(2,5)$}};
    \draw (2.35, -1.2) node {\scalebox{0.8}{$(2,4)$}};
    \draw (3.35, -1.2) node {\scalebox{0.8}{$(2,3)$}};
    \draw (0.35, -2.2) node {\scalebox{0.8}{$(3,6)$}};
    \draw (1.35, -2.2) node {\scalebox{0.8}{$(3,5)$}};
    \draw (2.35, -2.2) node {\scalebox{0.8}{$(3,4)$}};
    \draw (0.35, -3.2) node {\scalebox{0.8}{$(4,6)$}};
    \draw (1.35, -3.2) node {\scalebox{0.8}{$(4,5)$}};
    \draw (0.35, -4.2) node {\scalebox{0.8}{$(5,6)$}};
    \draw (0.5, -0.5) node {1};
    \draw (1.5, -0.5) node {0};
    \draw (2.5, -0.5) node {0};
    \draw (3.5, -0.5) node {0};
    \draw (4.5, -0.5) node {0};
    \draw (0.5, -1.5) node {1};
    \draw (1.5, -1.5) node {1};
    \draw (2.5, -1.5) node {0};
    \draw (3.5, -1.5) node {1};
    \draw (0.5, -2.5) node {1};
    \draw (1.5, -2.5) node {0};
    \draw (2.5, -2.5) node {0};
    \draw (0.5, -3.5) node {1};
    \draw (1.5, -3.5) node {1};
    \draw (0.5, -4.5) node {1};

    \draw[thick] (0, -1) -- (4, -1);
    \draw[thick] (0, -2) -- (3, -2);
    \draw[thick] (0, -3) -- (2, -3);
    \draw[thick] (0, -4) -- (1, -4);
    
    \draw[thick] (1, 0) -- (1, -4);
    \draw[thick] (2, 0) -- (2, -3);
    \draw[thick] (3, 0) -- (3, -2);
    \draw[thick] (4, 0) -- (4, -1);

    \draw[thick] (0, 0) -- (5, 0) -- (5, -1) -- (4, -1) -- (4, -2) -- (3, -2) -- (3, -3) -- (2, -3) -- (2,-4) -- (1,-4) -- (1, -5) -- (0, -5) -- (0,0);
\end{tikzpicture}
\caption*{\textit{Step 2:} When $w = 253641$, the boxes are filled with $1$'s as shown.}
\label{fig:step2}
\end{minipage}

\end{figure}

\begin{figure}[H]
\centering
\begin{minipage}{0.45\textwidth}
\centering
\begin{tikzpicture}[scale=1.2]
    \fill[\colorA, nearly transparent] (0, 0) -- (2, 0) -- (2, -4) -- (0, -4);
    \fill[\colorB, nearly transparent] (2,-1) -- (2, -3) -- (3, -3) -- (3, -1);
    \fill[\colorC, nearly transparent] (2, 0) -- (5, 0) -- (5, -1) -- (2, -1);
    \fill[\colorD, nearly transparent] (3, -1) -- (4, -1) -- (4, -2) -- (3, -2);
     \fill[\colorE, nearly transparent] (0, -4) -- (1, -4) -- (1, -5) -- (0, -5);

    \draw (0.35, -0.2) node {\scalebox{0.8}{$(1,6)$}};
    \draw (1.35, -0.2) node {\scalebox{0.8}{$(1,5)$}};
    \draw (2.35, -0.2) node {\scalebox{0.8}{$(1,4)$}};
     \draw (3.35, -0.2) node {\scalebox{0.8}{$(1,3)$}};
    \draw (4.35, -0.2) node {\scalebox{0.8}{$(1,2)$}};
    \draw (0.35, -1.2) node {\scalebox{0.8}{$(2,6)$}};
    \draw (1.35, -1.2) node {\scalebox{0.8}{$(2,5)$}};
    \draw (2.35, -1.2) node {\scalebox{0.8}{$(2,4)$}};
    \draw (3.35, -1.2) node {\scalebox{0.8}{$(2,3)$}};
    \draw (0.35, -2.2) node {\scalebox{0.8}{$(3,6)$}};
    \draw (1.35, -2.2) node {\scalebox{0.8}{$(3,5)$}};
    \draw (2.35, -2.2) node {\scalebox{0.8}{$(3,4)$}};
    \draw (0.35, -3.2) node {\scalebox{0.8}{$(4,6)$}};
    \draw (1.35, -3.2) node {\scalebox{0.8}{$(4,5)$}};
    \draw (0.35, -4.2) node {\scalebox{0.8}{$(5,6)$}};
    \draw (0.5, -0.5) node {1};
    \draw (1.5, -0.5) node {0};
    \draw (2.5, -0.5) node {0};
    \draw (3.5, -0.5) node {0};
    \draw (4.5, -0.5) node {0};
    \draw (0.5, -1.5) node {1};
    \draw (1.5, -1.5) node {1};
    \draw (2.5, -1.5) node {0};
    \draw (3.5, -1.5) node {1};
    \draw (0.5, -2.5) node {1};
    \draw (1.5, -2.5) node {0};
    \draw (2.5, -2.5) node {0};
    \draw (0.5, -3.5) node {1};
    \draw (1.5, -3.5) node {1};
    \draw (0.5, -4.5) node {1};

    \draw[thick] (0, -1) -- (4, -1);
    \draw[thick] (0, -2) -- (3, -2);
    \draw[thick] (0, -3) -- (2, -3);
    \draw[thick] (0, -4) -- (1, -4);
    
    \draw[thick] (1, 0) -- (1, -4);
    \draw[thick] (2, 0) -- (2, -3);
    \draw[thick] (3, 0) -- (3, -2);
    \draw[thick] (4, 0) -- (4, -1);

    \draw[thick] (0, 0) -- (5, 0) -- (5, -1) -- (4, -1) -- (4, -2) -- (3, -2) -- (3, -3) -- (2, -3) -- (2,-4) -- (1,-4) -- (1, -5) -- (0, -5) -- (0,0);
\end{tikzpicture}
\caption*{\textit{Step 3:} We consider a tiling by $n - 1$ rectangles.}
\label{fig:step3}
\end{minipage}
\hfill
\begin{minipage}{0.45\textwidth}
    \begin{tikzpicture}[scale=1.2]
    \fill[\colorA, nearly transparent] (0, 0) -- (2, 0) -- (2, -4) -- (0, -4);
    \fill[\colorB, nearly transparent] (2,-1) -- (2, -3) -- (3, -3) -- (3, -1);
    \fill[\colorC, nearly transparent] (2, 0) -- (5, 0) -- (5, -1) -- (2, -1);
    \fill[\colorD, nearly transparent] (3, -1) -- (4, -1) -- (4, -2) -- (3, -2);
     \fill[\colorE, nearly transparent] (0, -4) -- (1, -4) -- (1, -5) -- (0, -5);

    \draw (0.35, -0.2) node {\scalebox{0.8}{$(1,6)$}};
    \draw (1.35, -0.2) node {\scalebox{0.8}{$(1,5)$}};
    \draw (2.35, -0.2) node {\scalebox{0.8}{$(1,4)$}};
     \draw (3.35, -0.2) node {\scalebox{0.8}{$(1,3)$}};
    \draw (4.35, -0.2) node {\scalebox{0.8}{$(1,2)$}};
    \draw (0.35, -1.2) node {\scalebox{0.8}{$(2,6)$}};
    \draw (1.35, -1.2) node {\scalebox{0.8}{$(2,5)$}};
    \draw (2.35, -1.2) node {\scalebox{0.8}{$(2,4)$}};
    \draw (3.35, -1.2) node {\scalebox{0.8}{$(2,3)$}};
    \draw (0.35, -2.2) node {\scalebox{0.8}{$(3,6)$}};
    \draw (1.35, -2.2) node {\scalebox{0.8}{$(3,5)$}};
    \draw (2.35, -2.2) node {\scalebox{0.8}{$(3,4)$}};
    \draw (0.35, -3.2) node {\scalebox{0.8}{$(4,6)$}};
    \draw (1.35, -3.2) node {\scalebox{0.8}{$(4,5)$}};
    \draw (0.35, -4.2) node {\scalebox{0.8}{$(5,6)$}}; \draw (0.5, -0.5) node {1};
    \draw (1.5, -0.5) node {0};
    \draw (2.5, -0.5) node {0};
    \draw (3.5, -0.5) node {0};
    \draw (4.5, -0.5) node {0};
    \draw (0.5, -1.5) node {1};
    \draw (1.5, -1.5) node {1};
    \draw (2.5, -1.5) node {0};
    \draw (3.5, -1.5) node {1};
    \draw (0.5, -2.5) node {1};
    \draw (1.5, -2.5) node {0};
    \draw (2.5, -2.5) node {0};
    \draw (0.5, -3.5) node {1};
    \draw (1.5, -3.5) node {1};
    \draw (0.5, -4.5) node {1};

    \draw (5.25, -1.25) node {0};
    \draw (4.25, -2.25) node {1};
    \draw (3.25, -3.25) node {0};
    \draw (2.25, -4.25) node {6};
    \draw (1.25, -5.25) node {1};
    
    \draw[thick] (0, -1) -- (4, -1);
    \draw[thick] (0, -2) -- (3, -2);
    \draw[thick] (0, -3) -- (2, -3);
    \draw[thick] (0, -4) -- (1, -4);
    
    \draw[thick] (1, 0) -- (1, -4);
    \draw[thick] (2, 0) -- (2, -3);
    \draw[thick] (3, 0) -- (3, -2);
    \draw[thick] (4, 0) -- (4, -1);

    \draw[thick] (0, 0) -- (5, 0) -- (5, -1) -- (4, -1) -- (4, -2) -- (3, -2) -- (3, -3) -- (2, -3) -- (2,-4) -- (1,-4) -- (1, -5) -- (0, -5) -- (0,0);
\end{tikzpicture}
\caption*{\textit{Step 4:} $\Newton(D^{253641})$ has a vertex at $(0,1,0,6,1)$. }
\label{fig:step5}
\end{minipage}
\caption{$\Newton(D^{254361})$ has a vertex at $(0,1,0,6,1)$.}
\label{fig:01061}
\end{figure}
\end{example}
\begin{example}
    \Cref{fig:partition-tiling} illustrates the procedure for obtaining all vertices of $\Newton(D^{4213})$.
 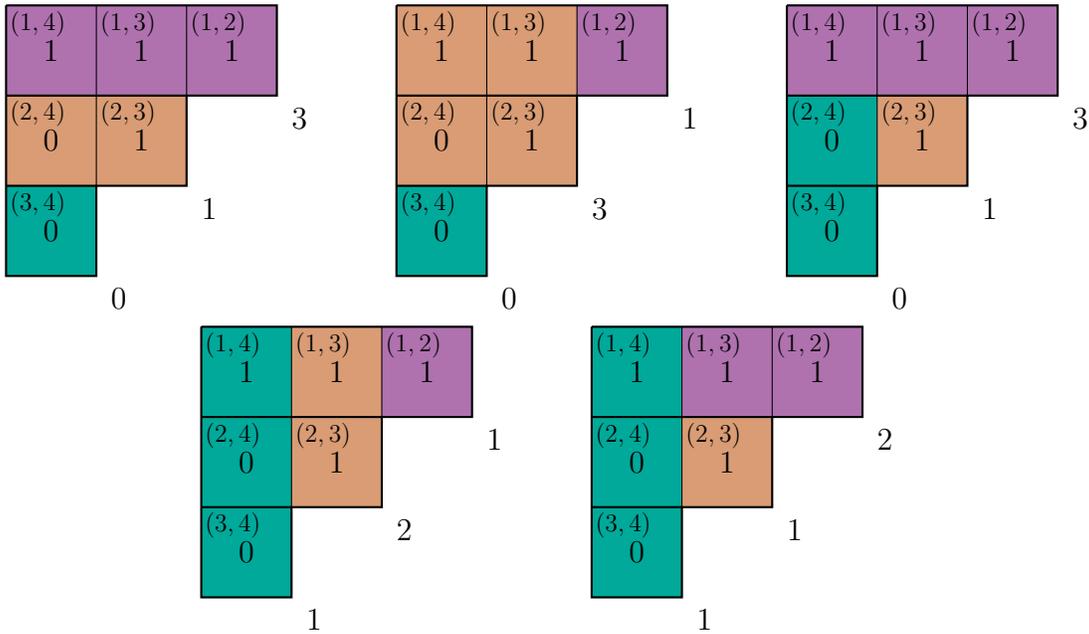
\begin{figure}[H]
       \centering
       \begin{tikzpicture}[scale=1.2]
    \fill[\colorA, nearly transparent] (0, 0) -- (3, 0) -- (3, -1) -- (0, -1);
    \fill[\colorB, nearly transparent] (0, -1) -- (2, -1) -- (2, -2) -- (0, -2);
    \fill[\colorC, nearly transparent] (0, -2) -- (1, -2) -- (1, -3) -- (0, -3);

    \draw (0.35, -0.2) node {\scalebox{0.8}{$(1,4)$}};
    \draw (1.35, -0.2) node {\scalebox{0.8}{$(1,3)$}};
    \draw (2.35, -0.2) node {\scalebox{0.8}{$(1,2)$}};
    \draw (0.35, -1.2) node {\scalebox{0.8}{$(2,4)$}};
    \draw (1.35, -1.2) node {\scalebox{0.8}{$(2,3)$}};
    \draw (0.35, -2.2) node {\scalebox{0.8}{$(3,4)$}};

    \draw (0.5, -0.5) node {1};
    \draw (1.5, -0.5) node {1};
    \draw (2.5, -0.5) node {1};
    \draw (0.5, -1.5) node {0};
    \draw (1.5, -1.5) node {1};
    \draw (0.5, -2.5) node {0};

    \draw (3.25, -1.25) node {3};
    \draw (2.25, -2.25) node {1};
    \draw (1.25, -3.25) node {0};
    
    \draw (0, -1) -- (2, -1);
    \draw (0, -2) -- (1, -2);
    \draw (1, 0) -- (1, -2);
    \draw (2, 0) -- (2, -1);

    \draw[thick] (0, -1) -- (2, -1);
    \draw[thick] (0, -2) -- (1, -2);

    \draw[thick] (0, 0) -- (3, 0) -- (3, -1) -- (2, -1) -- (2, -2) -- (1, -2) -- (1, -3) -- (0, -3) -- (0, 0);
\end{tikzpicture}
\qquad
\begin{tikzpicture}[scale=1.2]
    \fill[\colorA, nearly transparent] (2, 0) -- (3, 0) -- (3, -1) -- (2, -1);
    \fill[\colorB, nearly transparent] (0, 0) -- (2, 0) -- (2, -2) -- (0, -2);
    \fill[\colorC, nearly transparent] (0, -2) -- (1, -2) -- (1, -3) -- (0, -3);

    \draw (0.35, -0.2) node {\scalebox{0.8}{$(1,4)$}};
    \draw (1.35, -0.2) node {\scalebox{0.8}{$(1,3)$}};
    \draw (2.35, -0.2) node {\scalebox{0.8}{$(1,2)$}};
    \draw (0.35, -1.2) node {\scalebox{0.8}{$(2,4)$}};
    \draw (1.35, -1.2) node {\scalebox{0.8}{$(2,3)$}};
    \draw (0.35, -2.2) node {\scalebox{0.8}{$(3,4)$}};

    \draw (0.5, -0.5) node {1};
    \draw (1.5, -0.5) node {1};
    \draw (2.5, -0.5) node {1};
    \draw (0.5, -1.5) node {0};
    \draw (1.5, -1.5) node {1};
    \draw (0.5, -2.5) node {0};

    \draw (3.25, -1.25) node {1};
    \draw (2.25, -2.25) node {3};
    \draw (1.25, -3.25) node {0};
    
    \draw (0, -1) -- (2, -1);
    \draw (0, -2) -- (1, -2);
    \draw (1, 0) -- (1, -2);
    \draw (2, 0) -- (2, -1);

    \draw[thick] (0, -1) -- (2, -1);
    \draw[thick] (0, -2) -- (1, -2);

    \draw[thick] (0, 0) -- (3, 0) -- (3, -1) -- (2, -1) -- (2, -2) -- (1, -2) -- (1, -3) -- (0, -3) -- (0, 0);
\end{tikzpicture}
\qquad
\begin{tikzpicture}[scale=1.2]
    \fill[\colorA, nearly transparent] (0, 0) -- (3, 0) -- (3, -1) -- (0, -1);
    \fill[\colorB, nearly transparent] (1, -1) -- (2, -1) -- (2, -2) -- (1, -2);
    \fill[\colorC, nearly transparent] (0, -1) -- (1, -1) -- (1, -3) -- (0, -3);

    \draw (0.35, -0.2) node {\scalebox{0.8}{$(1,4)$}};
    \draw (1.35, -0.2) node {\scalebox{0.8}{$(1,3)$}};
    \draw (2.35, -0.2) node {\scalebox{0.8}{$(1,2)$}};
    \draw (0.35, -1.2) node {\scalebox{0.8}{$(2,4)$}};
    \draw (1.35, -1.2) node {\scalebox{0.8}{$(2,3)$}};
    \draw (0.35, -2.2) node {\scalebox{0.8}{$(3,4)$}};

    \draw (0.5, -0.5) node {1};
    \draw (1.5, -0.5) node {1};
    \draw (2.5, -0.5) node {1};
    \draw (0.5, -1.5) node {0};
    \draw (1.5, -1.5) node {1};
    \draw (0.5, -2.5) node {0};

    \draw (3.25, -1.25) node {3};
    \draw (2.25, -2.25) node {1};
    \draw (1.25, -3.25) node {0};
    
    \draw (0, -1) -- (2, -1);
    \draw (0, -2) -- (1, -2);
    \draw (1, 0) -- (1, -2);
    \draw (2, 0) -- (2, -1);

    \draw[thick] (0, -1) -- (2, -1);
    \draw[thick] (0, -2) -- (1, -2);

    \draw[thick] (0, 0) -- (3, 0) -- (3, -1) -- (2, -1) -- (2, -2) -- (1, -2) -- (1, -3) -- (0, -3) -- (0, 0);
\end{tikzpicture}

\begin{tikzpicture}[scale=1.2]
    \fill[\colorA, nearly transparent] (2, 0) -- (3, 0) -- (3, -1) -- (2, -1);
    \fill[\colorB, nearly transparent] (1, 0) -- (2, 0) -- (2, -2) -- (1, -2);
    \fill[\colorC, nearly transparent] (0, 0) -- (1, 0) -- (1, -3) -- (0, -3);

    \draw (0.35, -0.2) node {\scalebox{0.8}{$(1,4)$}};
    \draw (1.35, -0.2) node {\scalebox{0.8}{$(1,3)$}};
    \draw (2.35, -0.2) node {\scalebox{0.8}{$(1,2)$}};
    \draw (0.35, -1.2) node {\scalebox{0.8}{$(2,4)$}};
    \draw (1.35, -1.2) node {\scalebox{0.8}{$(2,3)$}};
    \draw (0.35, -2.2) node {\scalebox{0.8}{$(3,4)$}};

    \draw (0.5, -0.5) node {1};
    \draw (1.5, -0.5) node {1};
    \draw (2.5, -0.5) node {1};
    \draw (0.5, -1.5) node {0};
    \draw (1.5, -1.5) node {1};
    \draw (0.5, -2.5) node {0};

    \draw (3.25, -1.25) node {1};
    \draw (2.25, -2.25) node {2};
    \draw (1.25, -3.25) node {1};
    
    \draw (0, -1) -- (2, -1);
    \draw (0, -2) -- (1, -2);
    \draw (1, 0) -- (1, -2);
    \draw (2, 0) -- (2, -1);

    \draw[thick] (0, -1) -- (2, -1);
    \draw[thick] (0, -2) -- (1, -2);

    \draw[thick] (0, 0) -- (3, 0) -- (3, -1) -- (2, -1) -- (2, -2) -- (1, -2) -- (1, -3) -- (0, -3) -- (0, 0);
\end{tikzpicture}
\qquad
\begin{tikzpicture}[scale=1.2]
    \fill[\colorA, nearly transparent] (1, 0) -- (3, 0) -- (3, -1) -- (1, -1);
    \fill[\colorB, nearly transparent] (1, -1) -- (2, -1) -- (2, -2) -- (1, -2);
    \fill[\colorC, nearly transparent] (0, 0) -- (1, 0) -- (1, -3) -- (0, -3);

    \draw (0.35, -0.2) node {\scalebox{0.8}{$(1,4)$}};
    \draw (1.35, -0.2) node {\scalebox{0.8}{$(1,3)$}};
    \draw (2.35, -0.2) node {\scalebox{0.8}{$(1,2)$}};
    \draw (0.35, -1.2) node {\scalebox{0.8}{$(2,4)$}};
    \draw (1.35, -1.2) node {\scalebox{0.8}{$(2,3)$}};
    \draw (0.35, -2.2) node {\scalebox{0.8}{$(3,4)$}};

    \draw (0.5, -0.5) node {1};
    \draw (1.5, -0.5) node {1};
    \draw (2.5, -0.5) node {1};
    \draw (0.5, -1.5) node {0};
    \draw (1.5, -1.5) node {1};
    \draw (0.5, -2.5) node {0};

    \draw (3.25, -1.25) node {2};
    \draw (2.25, -2.25) node {1};
    \draw (1.25, -3.25) node {1};
    
    \draw (0, -1) -- (2, -1);
    \draw (0, -2) -- (1, -2);
    \draw (1, 0) -- (1, -2);
    \draw (2, 0) -- (2, -1);

    \draw[thick] (0, -1) -- (2, -1);
    \draw[thick] (0, -2) -- (1, -2);

    \draw[thick] (0, 0) -- (3, 0) -- (3, -1) -- (2, -1) -- (2, -2) -- (1, -2) -- (1, -3) -- (0, -3) -- (0, 0);
\end{tikzpicture}

\caption{$\Newton(D^{4213})$ has vertices $(3,1,0), (1,3,0), (1,2,1), (2,1,1)$.}
       \label{fig:partition-tiling}
   \end{figure}
\end{example}

\section{Future Directions}
We hope to generalize our results to Postnikov--Stanley polynomials. 
\begin{conjecture}
    Postnikov--Stanley polynomials are M-convex.
\end{conjecture}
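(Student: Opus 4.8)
The natural plan is induction on $\ell(w) - \ell(u)$ via the chain-splitting recursion obtained by cutting each saturated chain from $u$ to $w$ at its bottom edge:
\[
D_u^w = \frac{1}{\ell(w) - \ell(u)} \sum_{u \,\lessdot\, v \,\le\, w} m(u \lessdot v)\, D_v^w .
\]
Writing $m(u \lessdot v) = x_{a_v} + \cdots + x_{b_v - 1}$, \Cref{prop:suppsumprod} converts this into
\[
\supp(D_u^w) \;=\; \bigcup_{u \,\lessdot\, v \,\le\, w} \big( \{e_{a_v}, \dots, e_{b_v - 1}\} + \supp(D_v^w) \big).
\]
Assuming inductively that each $\supp(D_v^w)$ is M-convex, each summand is M-convex too, since $\{e_{a_v}, \dots, e_{b_v-1}\}$ is the base set of the matroid $M_{a_v b_v}$ and Minkowski sums of M-convex sets are M-convex \cite{DicreteConvexAnaly}. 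So the whole conjecture collapses to one question: \emph{why is this particular union of M-convex sets again M-convex?}

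I expect this to be the crux, and it is delicate for two reasons. A union of M-convex sets --- polytopally, the convex hull of a union of generalized permutahedra together with a saturation statement --- need not be M-convex in general, so the argument must genuinely use the combinatorics of the interval $[u,w]$. Moreover, the device that trivialized the case $u = \id$, namely a single greedy chain whose weight already realizes the full support (\Cref{theorem:DualSchubSCNP}), provably fails for general Postnikov--Stanley polynomials: $D_{1324}^{4231}$ has SNP but not SCNP. So no single term of the recursion dominates the union, and it cannot simply be collapsed.

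One combinatorial attack is to verify the M-convex exchange axiom directly: given $\alpha$ in the $v$-summand and $\beta$ in the $v'$-summand with $\alpha_i > \beta_i$, produce an exchange coordinate $j$ together with $\alpha - e_i + e_j$ and $\beta - e_j + e_i$ lying in (possibly different) summands. The two covers $v, v'$ of $u$ should be selected in a coordinated way --- through the top-down greedy construction of \Cref{lem:existsgreedychain} applied inside $[u,w]$, or through the shellability of Bruhat intervals --- so that $\Inv$-type interval comparisons in the spirit of \Cref{lemma:chainorder} force the exchange. A second, more structural attack is to guess an explicit presentation $\Newton(D_u^w) = P_n^z(\{z_I\})$: the recursion forces $z_I = \min_{u \lessdot v \le w}\big(\1_{I \supseteq [a_v,\, b_v)} + z_I^{(v)}\big)$, and it would then suffice to prove that these $z_I$ are submodular and, separately, that the support is saturated, the latter perhaps by contradiction with the M-convexity of the individual summands.

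A third route sidesteps the combinatorics: realize a normalization of $D_u^w$ as a volume or mixed-degree polynomial of a family of polytopes or projective varieties --- in the spirit of the identity expressing the $\lambda$-degree of $X_w$ as $\ell(w)!\, D^w(\lambda)$, now for a Richardson-type variety or a twisted Bruhat interval polytope --- and invoke the theorem of Br\"and\'en and Huh that volume polynomials of polytopes are Lorentzian, hence M-convex. The difficulty there is exhibiting the correct polytopal model and verifying that it varies with the parameters so as to keep the volume polynomial honestly Lorentzian. On balance, my bet is that the decisive obstacle is the union/saturation step: the induction unavoidably produces a union, the clean tool that tamed it for dual Schubert polynomials is unavailable, and some genuinely new ingredient --- a subtle exchange argument or the geometric detour --- will be required.
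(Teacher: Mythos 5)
This statement is left as an open conjecture in the paper --- there is no proof of it anywhere in the text --- so there is nothing for your argument to be measured against except the conjecture itself, and your proposal does not close it. What you have is a correct reduction, not a proof. The chain-splitting recursion $D_u^w = \tfrac{1}{\ell(w)-\ell(u)}\sum_{u\lessdot v\le w} m(u\lessdot v)\,D_v^w$ is right, \Cref{prop:suppsumprod} does turn it into a union of translated supports, and each individual summand $\{e_{a_v},\dots,e_{b_v-1}\}+\supp(D_v^w)$ is indeed M-convex by induction and closure of M-convex sets under Minkowski sum. But the step you flag as ``the crux'' --- that this particular union is again M-convex --- is exactly the entire content of the conjecture, and none of your three routes is carried out: the exchange-axiom attack is only described (you never produce the coordinated choice of covers $v,v'$ or the exchange coordinate $j$), the submodularity of the proposed $z_I = \min_v(\1_{I\supseteq[a_v,b_v)}+z_I^{(v)})$ is not verified and is anyway circular until saturation is also established, and the Lorentzian/volume-polynomial route is acknowledged to lack the needed polytopal model. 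Your own observation that $D_{1324}^{4231}$ has SNP but not SCNP shows why the paper's mechanism (\Cref{theorem:DualSchubSCNP}, via \Cref{lem:existsgreedychain} and \Cref{lemma:chainorder}) cannot be transplanted, which is consistent with the authors leaving this open; but identifying the obstacle is not the same as overcoming it.

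As a research plan your write-up is sensible and the framing of where the difficulty sits is accurate; just be clear that, as it stands, it proves nothing beyond what the paper already establishes for the case $u=\id$, and the conjecture remains open after your argument exactly as before it.
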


We are also interested in a characterization of when $D_u^w$ has SCNP. The following pattern-containment conjecture has been verified using SageMath \cite{sagemath} for $S_n$ with $n\le 6$.
\begin{conjecture}
    For a permutation $u\in S_n$, there exists a permutation $w\in S_n$ such that $D_{u}^w$ does not have SCNP if and only if $u$ contains 1324. Analogously, for a permutation $w\in S_n$, there exists a permutation $u\in S_n$ such that $D_{u}^w$ does not have SCNP if and only if $w$ contains 4231. 
\end{conjecture}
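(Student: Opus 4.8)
The plan is to reduce the biconditional to a single statement by exploiting a symmetry of the Bruhat order, and then to prove the two implications separately; I expect essentially all of the difficulty to lie in the ``only if'' direction. Right multiplication by the longest element $w_0$ is an order-reversing involution of $S_n$ sending the cover $v\lessdot vt_{ab}$ to the cover $vt_{ab}w_0\lessdot vw_0$, and since $vt_{ab}w_0=vw_0\,t_{n-b+1,\,n-a+1}$, the weight $x_a+\cdots+x_{b-1}$ of the original edge becomes the weight of its image after the substitution $x_i\mapsto x_{n-i}$. Hence saturated chains of $[u,w]$ biject with saturated chains of $[ww_0,uw_0]$, and $D_u^w$ has SCNP if and only if $D_{ww_0}^{uw_0}$ does. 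Since the one-line word of $uw_0$ is the reverse of that of $u$, ``$u$ contains $1324$'' is equivalent to ``$uw_0$ contains $4231$''; so the two halves of the conjecture are equivalent, and it suffices to prove that \emph{some $w\ge u$ has $D_u^w$ not SCNP if and only if $u$ contains $1324$}.

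For the ``if'' direction, the base case is $(u,w)=(1324,4231)$, where one computes $\supp(D_{1324}^{4231})$ from the finitely many saturated chains of $[1324,4231]$, checks it is the full set of lattice points of $\Newton(D_{1324}^{4231})$ (so SNP holds) but is achieved by no single $\supp(m_C)$ (so SCNP fails), as asserted in \Cref{section:SCNP}. For a general $u$ containing $1324$ at positions $i_1<i_2<i_3<i_4$ with $n\ge 5$, I would induct on $n$: delete some position $j\notin\{i_1,\dots,i_4\}$ to get a flattening $u'\in S_{n-1}$ that still contains $1324$, apply induction to obtain $w'\ge u'$ with $D_{u'}^{w'}$ not SCNP, and reinflate $w'$ to a suitable $w\ge u$. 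When the deleted strand can be chosen with an extremal value at an extremal position --- each of $v(1)=n$ and $v(n)=1$ cuts out a principal up-set of the Bruhat order, and no occurrence of $1324$ can use such an entry --- the reinflation is immediate, since imposing the corresponding condition confines all of $[u,w]$ to a copy of $S_{n-1}$ on which $D_u^w$ is a relabelled copy of $D_{u'}^{w'}$. The residual case, in which no pattern-preserving extremal deletion is available, needs a genuine pattern-lifting argument: choose $w$ so that $[u,w]$ admits a weight-compatible surjection onto $[1324,4231]$ (controlling simultaneously that $\ell(w)-\ell(u)=4$ and the image of each saturated chain) and transfer the failure of SCNP through it. This lifting step is a second, independent source of difficulty.

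For the ``only if'' direction --- the main obstacle --- assume $u$ avoids $1324$; we must exhibit a dominant chain of $[u,w]$ for every $w\ge u$. Generalizing \Cref{lemma:chainorder} and \Cref{lemma:chainNPcontainment}, the organizing principle is: if the generating multisets $\{G_C\}$ over all saturated chains $C$ of $[u,w]$ have a $\preceq_{\ell(w)-\ell(u)}$-greatest element $G^\ast$ realized by a chain $C^\ast$, then $C^\ast$ is dominant, since each linear factor of every $m_C$ has its index interval contained in a factor of $m_{C^\ast}$, whence $\supp(m_C)\subseteq\supp(m_{C^\ast})$ for all $C$ and $\supp(D_u^w)=\supp(m_{C^\ast})$. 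The plan is to prove that $1324$-avoidance of $u$ guarantees such a greatest element for every $w$, by running the top-down induction of \Cref{lemma:greedy=GW} on $\ell(w)-\ell(u)$: at a greedy descent step $w=w''t_{ab}$ one tracks how the candidate greatest multiset restricts from $[u,w]$ to $[u,w'']$, analyzing the inversion pairs of shapes $(a,j)$ and $(j,b)$ exactly as in that proof. For $u=\mathrm{id}$ all ``bad'' shapes are excluded by greediness alone; for general $u$ a real obstruction --- two pairs that cannot be enlarged simultaneously, giving incomparable generating multisets with no common upper bound realizable in $[u,w]$ --- can survive, and I expect to show it survives precisely when the positions involved witness a $1324$ pattern in $u$. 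Pinning down this last equivalence uniformly over all $w$ is the crux of the conjecture and the step I expect to be hardest: it appears to demand a structural description of the Bruhat intervals lying above a $1324$-avoiding permutation --- a ``conflict-free'' property of their greedy chains --- rather than the purely local inversion bookkeeping that sufficed for dual Schubert polynomials in \Cref{theorem:DualSchubSCNP}.
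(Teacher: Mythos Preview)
This statement is presented in the paper as an open \emph{conjecture} in the Future Directions section, supported only by SageMath verification for $n\le 6$; the paper offers no proof, so there is nothing to compare your argument against.

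Your reduction of the two halves to one another via right multiplication by $w_0$ is correct and is a clean observation the paper does not record. Beyond that, what you have written is a strategic outline in which the decisive steps are explicitly left open, not a proof. In the ``if'' direction, your inductive deletion scheme is only immediate when $u$ already satisfies $u(1)=n$ or $u(n)=1$; for a generic $u$ containing $1324$ you defer to a ``pattern-lifting argument'' you do not carry out, and there is no a priori reason a chosen $w$ with $\ell(w)-\ell(u)=4$ should make $[u,w]$ surject weight-compatibly onto $[1324,4231]$---the other entries of $u$ can create extra covers and extra chains in the interval that have no image. In the ``only if'' direction, your plan to show that greedy chains remain dominant above every $1324$-avoiding $u$ is the natural generalization of \Cref{theorem:DualSchubSCNP}, but your own analysis already locates the obstruction: once $u\ne\id$, the greedy step need not absorb all competing inversion pairs, and you have not shown that a surviving conflict forces a $1324$ pattern in $u$---you only \emph{expect} this. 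That expectation is precisely the content of the conjecture.

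In short, the proposal isolates the right symmetry and a plausible inductive scaffold, but both directions remain genuinely open; the statement is still a conjecture.
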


\bibliographystyle{plain} 
\bibliography{main}

\end{document}